\newtheorem{lemma}{Lemma}[section]
\newtheorem{theorem}[lemma]{Theorem}
\newtheorem{proposition}[lemma]{Proposition}
\newtheorem{claim*}{Claim}
\newtheorem*{thm*}{Theorem}
\theoremstyle{definition}
\newtheorem{defn}[lemma]{Definition}
\newtheorem{remark}[lemma]{Remark}
\newtheorem{example}[lemma]{Example}
\numberwithin{equation}{section}
\newcommand{\bc}{\boldsymbol{c}}
\newcommand{\bd}{\boldsymbol{d}}
\newcommand{\bh}{\boldsymbol{h}}
\newcommand{\bs}{\boldsymbol{s}}
\newcommand{\bt}{\boldsymbol{t}}
\newcommand{\bu}{\boldsymbol{u}}
\newcommand{\bv}{\boldsymbol{v}}
\newcommand{\bx}{\boldsymbol{x}}
\newcommand{\bgamma}{\boldsymbol{\gamma}}
\newcommand{\bzeta}{\boldsymbol{\zeta}}
\newcommand{\beps}{\boldsymbol{\varepsilon}}
\newcommand{\bone}{\boldsymbol{1}}
\newcommand{\bzero}{\boldsymbol{0}}
\newcommand{\R}{\mathbb{R}}
\newcommand{\Z}{\mathbb{Z}}
\newcommand{\N}{\mathbb{N}}
\newcommand{\Q}{\mathbb{Q}}
\newcommand{\PP}{\mathbb{P}}
\newcommand{\frakm}{\mathfrak{m}}
\newcommand{\Spec}{\mathrm{Spec}}
\def\calA{\mathcal{A}}
\def\calD{\mathcal{D}}
\def\calM{\mathcal{M}}
\def\calO{\mathcal{O}}
\def\calP{\mathcal{P}}
\def\calT{\mathcal{T}}
\def\calX{\mathcal{X}}
\DeclareMathOperator{\val}{val}
\def\mmod#1{\;(\mathrm{mod}\;{#1})}
\begin{document}

\setcounter{tocdepth}{2}

\title{Campana points on diagonal hypersurfaces}
  
\author{Francesca Balestrieri, Julia Brandes, Miriam Kaesberg, Judith Ortmann, Marta Pieropan, Rosa Winter} 

\address{The American University of Paris, 5 Boulevard de la Tour-Maubourg, 75007, Paris, France}
\email{fbalestrieri@aup.edu}
\address{Mathematical Sciences, University of Gothenburg and Chal\-mers Institute of Technology, 412 96 G\"oteborg, Sweden}
\email{brjulia@chalmers.se}
\address{University of G\"ottingen, Mathematical Institute, Bunsenstra{\ss}e 3-5, 37073 G\"ottingen, Germany}
\email{miriam.kaesberg@mathematik.uni-goettingen.de}
\address{Institut f\"ur Algebra, Zahlentheorie und Diskrete Mathematik, Leibniz Universit\"at Hannover, Welfengarten 1, 30167 Hannover, Germany}
\email{ortmann@math.uni-hannover.de}
\address{Utrecht University, Mathematical Institute, Budapestlaan 6, 3584 CD Utrecht, the Netherlands}
\email{m.pieropan@uu.nl}
\address{King's College London, Department of Mathematics, Strand, London, WC2R 2LS, United Kingdom}
\email{rosa.winter@kcl.ac.uk}

\date{\today}


\maketitle

\begin{abstract}
We construct an integral model for counting Campana points of bounded height on diagonal hypersurfaces of degree greater than one, and give an asymptotic formula for their number, generalising work by Browning and Yamagishi. The paper also includes background material on the theory of Campana points on hyperplanes and previous results in the field.
\end{abstract}

\setcounter{tocdepth}{1}
\tableofcontents

\section{Introduction}
The study of \textsl{Campana points} on varieties has gained a lot of attention in recent years. Philosophically, sets of Campana points on a variety over a number field interpolate between the set of rational points and the set of integral points. 
As such, the general aim when studying Campana points is to use techniques from the study of rational points to say something about Campana points, which can in turn be used to understand integral points better. There are several definitions of Campana points (see Section \ref{sec: Campana points}), all having in common that they are integral points on a proper variety satisfying prescribed
`intersection conditions' with respect to a boundary divisor. When $X$ is a hypersurface in a projective space $\mathbb{P}_{\mathbb{Q}}^n$ 
and the closure of $X$ in $\mathbb{P}_{\mathbb{Z}}^n$ is a flat proper model of $X$ over $\mathbb Z$, a set of Campana points on $X$ for the boundary divisor induced by the coordinate hyperplanes of the projective space is the set of $\mathbb Z$-points in $X$ for which the coordinates are \textsl{$m$-full} for different $m$ (an integer $a$ is $m$-full if $p|a$ implies $p^m|a$ for all primes $p$), see Example \ref{example}.

Many classical results and conjectures for rational points have been formulated also for Campana points; an overview of some of the following can be found in \cite{Tanimoto21}. An analogue of the Mordell Conjecture for Campana points (\textsl{orbifold Mordell conjecture}) over function fields was introduced and proven in characteristic~0 in \cite{Ca05}, and in positive characteristic in \cite{KPS22}. 
Asymptotics for the number of Campana points of bounded height were first given in \cite{VV12} 
for linear hypersurfaces in $\mathbb P^n_{\Q}$ for $n\geq 4$, with the prescribed intersection conditions corresponding to points with squareful coordinates. 
The same was done for $n=3$ in \cite{Shute21}, while \cite{BY21} deals with general $m$-full conditions on each coordinate for $n$ large enough (see Section~\ref{subsec: applications circle method to campana points} for more details on these counting results). Upper and lower bounds for $n=2$ are given in \cite{BVV12}. 
In \cite{PSTVA21}, the authors gave an analogue of the Manin Conjecture for Campana points of bounded height on Fano orbifolds (\textsl{PSTVA Conjecture}), and proved it for equivariant compactifications of vector groups. 
The PSTVA Conjecture has also been proven for split toric varieties with torus invariant boundary \cite{PS, Santens22}, and biequivariant compactifications of the Heisenberg group \cite{Xiao22}. It is also compatible with the order of growth in the results in \cite{VV12}, \cite{BVV12}, \cite{BY21} and \cite{Shute21}. 
However, in \cite{Shute22} it is shown that the conjectured leading constant in \cite{BVV12} does not agree with the one predicted by the PSTVA Conjecture for the orbifold $(\mathbb P^1,\frac 12 [0] +\frac 12 [\infty] +\frac 12 [1])$, and a counterexample to the leading constant predicted by the PSTVA Conjecture is given by counting Campana points on $\mathbb{P}^1$ with the boundary divisor $\frac 12\{37x^2+109y^2=0\}$. For the same orbifolds, \cite{Shute22} shows that the leading constant can be made arbitrarily small by excluding suitable thin sets from the counting problem. To the best of our knowledge, it has not yet been investigated whether the leading constants in \cite{VV12}, \cite{BY21} and \cite{Shute21} agree with the PSTVA Conjecture.

The first asymptotics for the arguably harder case of \textsl{weak} Campana points are given in \cite{Streeter22}, where the author counts such points of bounded height on orbifolds associated to norm forms for Galois extensions of number fields. 
The same paper also contains the first asymptotics for Campana points on \textsl{singular} orbifolds. Weak Campana points also appear in \cite{Santens22}, where the author counts diagonal quartic surfaces with a Brauer-Manin obstruction to the Hasse principle, and shows that this number gives a lower bound for the number of weak Campana points in $\mathbb{P}^3$ for the boundary divisor induced by the coordinate hyperplanes. The study of the potential density of rational points (i.e., density of rational points over a finite extension of the base field) 
was translated to Campana points as well, with Campana first stating a conjecture in this direction for Campana points on curves over number fields in \cite{Ca05}. 
In \cite{AVA18}, the authors extended the conjecture to a much larger class of varieties, and they showed that their conjecture recovers Lang's conjecture for rational points on varieties of general type, and the Lang-Vojta conjecture for $S$-integral points on varieties of logarithmic general type. The authors of \cite{BD19} proved the conjecture when the boundary divisor is a general smooth divisor satisfying some extra conditions. Instances of the conjecture formulated over function fields are proven in \cite{RTW21} and in \cite{GF22}. 
Finally, analogues of local-global principles for Campana points are also studied: in \cite{NS20} the authors defined the Campana analogues of weak weak approximation and the Hilbert property, and proved the Campana version of a result by Colliot-Thélène and Ekedahl that weak weak approximation implies the Hilbert property. A Campana analogue of the Hasse Principle is defined in \cite{MNS22}, where the authors studied weak approximation and Brauer-Manin obstructions for Campana points.

\vspace{11pt}

In the setting mentioned earlier, where $X$ is a hypersurface in projective space over $\mathbb{Q}$, counting Campana points of bounded height is equivalent to counting $m$-full solutions of bounded size to homogeneous equations. This counting problem lends itself very well to the \textsl{circle method}, a powerful counting method in analytic number theory. So far, the circle method has been used to count Campana points when both $X$ and the boundary divisor are linear \cite{VV12,BVV12,BY21,Shute21}; we give more detail on this in Section \ref{subsec: applications circle method to campana points}. In this paper we extend the techniques in \cite{BY21} to study the problem of counting Campana points on a \textsl{non-linear} diagonal hypersurface $X$ in $\mathbb{P}^n$. Our main result is the following.

\begin{theorem}\label{thm: diagonal hypersurface}
Let $k,n,m_0,\ldots, m_n$ be positive integers. Let $B$ be a positive real number.
Let $X\subset\mathbb{P}^n_{\mathbb{Q}}$ be the hypersurface given by $\sum_{i=0}^nc_ix_i^k=0$ with $c_0,\dots,c_n\in\mathbb{Z}_{\neq0}$ and $\gcd(c_0,\dots,c_n)=1$. Let $D$ be the $\mathbb{Q}$-divisor on $X$ given by $D=\sum_{i=0}^n(1-\frac 1{m_i})\{x_i=0\}$.
With the model defined in Section \ref{sec: proper model} and the height function induced by the Weil height on $\mathbb P^n(\mathbb Q)$, the set of Campana $\mathbb Z$-points of height at most $B$ on $X\cap \left(\mathbb{P}^n\smallsetminus \bigcup_{i=0}^n \{x_i=0\} \right)$ with respect to the boundary divisor $D$ is given by \[
   N(X,D,B) =  \left\{ (x_0: \dots: x_n) \in \mathbb P^n(\mathbb Q) \left| \begin{array}{c} 
   x_0,\dots,x_n\in \mathbb Z_{\neq 0},\\
   \gcd(x_0, \dots, x_n)=1,\\    
    x_i \textup{ is } m_i \textup{-full}\ \forall i\in\{0,\dots,n\},\\ 
    |x_0|, \dots,|x_n| \le B, \\
    c_0x_0^k+\dots +c_n x_n^k=0\end{array}  \right.\right\}.\]
Assume that $k\geq 2$,  $2\leq m_0\leq \dots\leq m_n$ and that 
\begin{align}\label{eq:maincond}
    \sum_{i=0}^{n} \frac{1}{2 s_0(k m_i)} > 1,
\end{align}
where 
\begin{align*}
    s_0(m) = \min\{ 2^{m-1},  \textstyle{\frac12} m (m-1) + \lfloor \sqrt{2m+2}\rfloor \} \qquad \forall m\in\N.
\end{align*}
    Then there exist constants $C\geq 0$ and $\eta>0$ such that for all $B> 0$ we have
    \begin{equation}\label{eq:asymptotic}
    \# N(X,D,B)=C B^{k\Gamma} + O_\eta\left(B^{k\Gamma -\eta}\right),
    \end{equation}
    where $k\Gamma=\sum_{i=0}^n\frac 1{m_i}-k$ is the Fujita invariant of $(X,D)$ with respect to the line bundle $\mathcal O_X(1)$.
\end{theorem}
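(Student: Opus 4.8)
The plan is to rewrite $\#N(X,D,B)$ as a count of $m_i$-full solutions of the diagonal equation, to parametrise the $m_i$-full variables, and then to feed the resulting diagonal equations \emph{of mixed degree} into the Hardy--Littlewood circle method, following the template of \cite{BY21}; the genuinely new feature is that the $k$-th powers replace the exponents $m_i$ by the larger exponents $km_i$. First I would observe that, since a primitive integral representative of a point of $\mathbb{P}^n(\mathbb{Q})$ is unique up to sign, $\#N(X,D,B)$ is $\tfrac12$ times the number of $(x_0,\dots,x_n)\in(\mathbb{Z}_{\neq0})^{n+1}$ with $\gcd(x_0,\dots,x_n)=1$, each $x_i$ being $m_i$-full, $|x_i|\le B$, and $\sum_ic_ix_i^k=0$. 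Every $m_i$-full integer can be written as $x_i=\pm\prod_{j=m_i}^{2m_i-1}b_{i,j}^{\,j}$ with the $b_{i,j}$ squarefree and pairwise coprime; the constraint $\prod_j|b_{i,j}|^{\,j}\le B$ forces $|b_{i,m_i}|\le P_i:=B^{1/m_i}$ while the remaining $b_{i,j}$ range over sets whose contributions, weighted by $|b_{i,j}|^{-j/m_i}$, sum absolutely because $j/m_i>1$. Stripping the squarefreeness, the pairwise coprimality and the global condition $\gcd(x_0,\dots,x_n)=1$ by Möbius inversion introduces auxiliary moduli over which everything must be controlled uniformly; after fixing these data, the equation becomes $\sum_{i=0}^n\widetilde c_i\,t_i^{km_i}=0$ in $t_i:=b_{i,m_i}$ with $|t_i|\le P_i$, the coefficients $\widetilde c_i$ built from $c_i$ and the small fixed variables, subject to congruence conditions modulo the Möbius moduli.

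Next I would run the circle method for each fixed choice of auxiliary data, writing the number of solutions as $\int_0^1\prod_{i=0}^n S_i(\alpha)\,d\alpha$ with $S_i(\alpha)=\sum_t e(\alpha\widetilde c_it^{km_i})$ a congruence-restricted Weyl sum ($e(\theta)=e^{2\pi i\theta}$). On a standard family of major arcs $\mathfrak{M}$ each $S_i$ is approximated by a Gauss sum times an archimedean integral; since $P_i^{km_i}=B^k$ for every $i$, the resulting main term has order $\prod_iP_i/B^k=B^{\sum_i1/m_i-k}=B^{k\Gamma}$, and summing over all auxiliary data — whose total mass converges by the decay above together with absolute convergence of the singular series $\mathfrak{S}$ (a product of local densities carrying all the congruence conditions) and of the singular integral — produces the constant $C\ge0$, which can vanish if the local conditions are incompatible. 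Hypothesis \eqref{eq:maincond} is exactly what guarantees there are enough variables, of small enough degree, for the major-arc error terms and for $\mathfrak{S}$ and the singular integral to converge uniformly in the auxiliary data.

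On the minor arcs $\mathfrak{m}=[0,1)\smallsetminus\mathfrak{M}$ I would need $\int_{\mathfrak{m}}\prod_i|S_i|\,d\alpha\ll B^{k\Gamma-\eta}$ for some $\eta>0$, uniformly in the auxiliary data. The two inputs for the $i$-th block are a Weyl bound $\sup_{\mathfrak{m}}|S_i|\ll P_i^{\,1-\sigma_i+\varepsilon}$ for some $\sigma_i>0$, and a mean-value bound $\int_0^1|S_i|^{2s_0(km_i)}\,d\alpha\ll P_i^{\,2s_0(km_i)-km_i+\varepsilon}$: this is Hua's inequality on the branch $s_0(km_i)=2^{km_i-1}$ and a sharper $O(k^2)$-variable mean value of Vinogradov--Heath-Brown type on the branch $s_0(km_i)=\tfrac12km_i(km_i-1)+\lfloor\sqrt{2km_i+2}\rfloor$. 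Because \eqref{eq:maincond} is the \emph{strict} inequality $\sum_i(2s_0(km_i))^{-1}>1$, I can choose Hölder exponents $w_i$ with $\sum_iw_i^{-1}=1$ and $w_i>2s_0(km_i)$ for every $i$; interpolating $\|S_i\|_{L^{w_i}(\mathfrak{m})}$ between $\sup_{\mathfrak{m}}|S_i|$ and $\|S_i\|_{L^{2s_0(km_i)}([0,1))}$ extracts a positive power of each Weyl saving $P_i^{-\sigma_i}$, and a direct computation with $P_i=B^{1/m_i}$ then gives $\int_{\mathfrak{m}}\prod_i|S_i|\ll B^{\sum_i1/m_i-k-\eta}$ with $\eta>0$.

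The main obstacle will not be any single one of these steps but uniformity: the Weyl bound, Hua's inequality, the sharper Vinogradov-type mean value, and the major-arc approximations must all hold with explicit control in the Möbius moduli and in the (possibly large) coefficients $\widetilde c_i$, so that the outer summation over all parametrisation data and Möbius moduli converges without eroding the power saving. Assembling the classical one-variable estimates in this uniform, congruence-twisted form, and managing the interplay between the parametrisation of $m_i$-full integers and the global coprimality condition, is where the bulk of the work lies; combining the major- and minor-arc contributions and summing over all auxiliary data then yields \eqref{eq:asymptotic}.
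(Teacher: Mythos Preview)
Your approach is essentially the same as the paper's: the halving step, the parametrisation of $m_i$-full integers, the M\"obius-type inversion (via the function $\varpi$ of \cite{BY21}) to remove the coprimality conditions, the circle method with the H\"older argument on the minor arcs (your exponents $w_i$ are exactly the paper's $\ell_i=2s_0(km_i)/\beta_i$ with $\sum_i\beta_i/(2s_0(km_i))=1$), and the identification of uniformity in the auxiliary data as the main difficulty all match. The paper carries out that uniformity control by bounding three error sums $F_1,F_2,F_3$ over all the parametrisation and M\"obius data.

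One slip to fix: if \emph{all} the $b_{i,j}$ are squarefree and pairwise coprime, then every prime dividing $x_i$ appears to an exponent in $\{m_i,\dots,2m_i-1\}$, so for instance $p^{2m_i}$ is missed and your parametrisation does not cover all $m_i$-full integers. The correct parametrisation (as in the paper, following \cite{BY21}) is $x_i=\pm u_i^{m_i}\prod_{r=1}^{m_i-1}v_{i,r}^{m_i+r}$ with only the $v_{i,r}$ squarefree and pairwise coprime and $u_i\in\N$ unrestricted; equivalently, drop the squarefree condition on your $b_{i,m_i}$. With this correction no M\"obius is needed to free the main variable, and the rest of your outline goes through unchanged.
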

An explicit expression for the constant $C$ is given in \eqref{eq: leading constant thm 1.1}. 
Further analysis of the leading constant, including determining under which conditions it is non-zero, will be carried out in a forthcoming paper.

\begin{remark}
For $k=1$, a version of Theorem \ref{thm: diagonal hypersurface} was established in \cite{BY21}, and it is compatible with the PSTVA Conjecture as far as the order of growth is concerned. For $k\geq 2$, the integral model obtained by the closure of $X$ in $\mathbb{P}^n_{\mathbb Z}$ that we use in Theorem \ref{thm: diagonal hypersurface} is not regular. However, the order of growth in Theorem \ref{thm: diagonal hypersurface} is still compatible with the prediction in \cite[Conjecture 1.1]{PSTVA21}. In Section \ref{subsec:regular model} we describe the set of Campana points on the regular model obtained by inverting all the prime numbers dividing $k\cdot c_0\cdots c_n$. The set of Campana points with respect to this model is larger than the set in Theorem~\ref{thm: diagonal hypersurface}, but the corresponding asymptotic, which we compute in a forthcoming paper, is expected to have the same order of magnitude as \eqref{eq:asymptotic}.
\end{remark}

\begin{remark}Browning and Yamagishi \cite{BY21} had the condition $\sum_{i=0}^{n-1} \frac{1}{m_i(m_i+1)} \ge 1$ in their result. This seems  stricter than necessary, since on the one hand it isolates the last coefficient, creating a somewhat artificial condition, and on the other hand it fails to take advantage of the strongest available bounds from \cite[Section 14]{Woo19}. In our result, we address both of those issues in the case $k \ge 2$. In principle, the methods can be extended also to the case $k=1$, but this would require a more careful treatment in some parts of the argument. Since the main focus of our paper is on equations of degree $2$ or higher, we did not expend the extra effort that would be required to achieve a strengthening of the result of \cite{BY21}. For context, in the smallest possible case that is admissible within Theorem~\ref{thm: diagonal hypersurface}, namely $k=m_0=\ldots=m_n=2$, we have $s_0(km_i)=s_0(4)=8$, and consequently the condition \eqref{eq:maincond} translates into the bound $n \ge 16$.

Although it improves the range of applicability compared to \cite{BY21}, the condition in our theorem is still far from the log Fano condition $\sum_{i=0}^n\frac 1{km_i}>1$, which is the geometric condition under which it makes sense to expect an asymptotic as predicted by the PSTVA Conjecture. In fact, as far as applications of the circle method involving mean value estimates are concerned, the limit of the method is at $\sum_{i=0}^n\frac 1{km_i}>2$.   
\end{remark}

\subsection{Outline of the paper}
The paper is organized as follows. In Section \ref{sec: circle method} we give a brief history of the circle method and explain the main ideas of the technique. We also give an overview of applications of the circle method to the problem of counting Campana points of bounded height. In Section~\ref{sec: Campana points} we define Campana points, and give an overview of different definitions that have been used in the literature. We focus on diagonal hypersurfaces in Section \ref{sec: campana points on diagonal hypersurfaces}, where we describe sets of Campana points of bounded height and set up the counting problem for Theorem \ref{thm: diagonal hypersurface}. In Section \ref{sec: circle method result} 
we develop a variant of the circle method of \cite{BY21} combined with the bounds in \cite[Section 14]{Woo19}, and we finally prove Theorem \ref{thm: diagonal hypersurface} in Section \ref{sec:proof main theorem}.

\subsection{Notation} 
\label{sec: notation}
Given $m\in\N$ and a set $S$ of prime numbers, an integer $x$ is \textsl{$m$-full outside $S$} if  $p^m|x$ holds for every prime $p\notin S$ dividing $x$; we say that $x$ is \textsl{$m$-full} if we can take $S=\emptyset$. To avoid confusion, we stress here that we do not consider $0$ to be an element of $\N$.

For a number field $F$, and for a finite set $S$ of places of $F$ containing all infinite places, we denote by $\mathcal{O}_{F,S}$ the ring of $S$-integers, that is, all elements of $F$ with non-negative $v$-adic valuation for all places $v\notin S$. We denote by $\infty$ the infinite place of $\Q$. For a prime number $p$, we denote by $\val_p$ the $p$-adic valuation.

In Sections \ref{sec: campana points on diagonal hypersurfaces}, \ref{sec: circle method result} and \ref{sec:proof main theorem}, we denote tuples of integers by bold letters as follows: $\bc=(c_0,\dots,c_n) \in \Z^{n+1}$, and similarly $\bd,\beps, \bs, \bx, \tilde\bu \in \Z^{n+1}$. We further have $\bt=(t_{i,r})_{0\leq i\leq n, 1\leq r\leq m_i-1} \in\N^{\sum_i m_i - n}$ for $m_i$ as in Theorem~\ref{thm: diagonal hypersurface}, and similarly $\tilde \bv \in \N^{\sum_i m_i - n}$. 
Throughout, we assume that $B$ is a large positive real number.
For $\bx\in \Z^{n+1}$, we write $|\bx|=\max_{0\leq i\leq n}|x_i|$. 
We denote by $\Z_{\textup{prim}}^{n+1}$ the set $\{\bx\in\Z^{n+1}:\gcd(x_0,\dots,x_n)=1\}$, by $\prod_p$ a product over all prime numbers, and by $\mu$ the M\"obius function. 
For $\alpha\in\mathbb R$, we write $e(\alpha) = \exp(2\pi i\alpha)$, and we denote by $\| \alpha \|$ the distance between $\alpha$ and its closest integer.

Throughout, the implicit constants in the estimates $\ll$ and $O(\cdot)$ are allowed to depend on the fixed data $k$, $\epsilon$ and $\delta$, as well as the coefficients $c_0,\dots,c_n$ and $d_0,\dots,d_n$, and the parameters  $m_0,\dots,m_n$ and $\tilde m_0,\dots,\tilde m_n$. In particular, all statements involving the symbol $\epsilon$ are asserted to hold for all $\epsilon>0$. Here, we do not track the precise `value' of $\epsilon$, which consequently may vary from one expression to the next.

\subsection{Acknowledgements}
We want to thank the organizers of the workshop Women in Numbers Europe 4 which took place in the summer of 2022 in Utrecht, The Netherlands, which is where this project originated. We thank Tim Browning for useful comments. While working on this paper Judith Ortmann was partially supported by the Caroline Herschel Programme of Leibniz University Hannover, as well as a scholarship for a research stay abroad of the Graduiertenakademie of Leibniz University Hannover since part of the work was done while visiting the University of Bath. Rosa Winter was supported by UKRI Fellowship MR/T041609/1. In the final stages of the work, Julia Brandes was supported by Project Grant 2022-03717 from Vetenskapsr{\aa}det (Swedish Science Foundation), and Marta Pieropan was supported by the grant VI.Vidi.213.019 of the Nederlandse Organisatie voor Wetenschappelijk Onderzoek. Part of the work was completed while Julia Brandes was visiting the Max Planck Institute for Mathematics in Bonn, whose generous support is also gratefully acknowledged. 
Last but not least, we are grateful to Sam Streeter as well as two anonymous referees for their useful comments.

\section{The circle method}\label{sec: circle method}

\subsection{History of the circle method}
In the 1920's Hardy and Littlewood developed the so called `Hardy-Littlewood circle method' (sometimes called just `Hardy-Littlewood method' or `circle method'), which is an analytic method to treat additive problems in number theory.
These problems deal with the representation of a large number as a sum of numbers of some specified type. The most famous additive problem is Waring's problem:
Let $k$ be a positive integer. Can every large integer $N$ be written as a sum of a bounded number of $k$-th powers
\begin{equation}\label{eq: Waring equation}
    N = x_1^k + \dots + x_s^k,
\end{equation}
where $s$ is a positive integer?
Let $r_{k,s}(N)$ denote the number of such representations of $N$. Then, Waring's problem is equivalent to showing that $r_{k,s}(N)>0$ for some $s$ and all sufficiently large integers $N$. Waring's problem was first proved by Hilbert in 1909. From 1918 to 1920, 
Hardy and Littlewood together with Ramanujan were the first to prove an explicit upper bound for $r_{k,s}(N)$ by using a new analytic method. Their work laid down the foundations of the circle method in its original form (see \cite{Nathanson96, davenport_browning_2005}).

It turned out that the circle method is a very powerful method, since it can be used to solve a diverse range of number theoretic problems. For example, it is one of the most significant all purpose tools when studying rational points on higher-dimensional algebraic varieties. We refer to \cite{browning09} for an overview.

In its earliest versions, the starting point of the circle method is the generating function
\[f(z) = \sum_{a\in A} z^a \quad (\vert z\vert<1),\]
where $A$ denotes a set of nonnegative integers. 
Taking $f(z)$ to the $s$th 
power yields the series
\[ f(z)^s = \sum_{N=0}^{\infty} R_{A,s}(N)z^N,\]
whose coefficients $R_{A,s}(N)$ 
encode the number of solutions of the equation 
\begin{equation}\label{eq: Waring-eq 2}
    N = a_1 + \dots + a_s
\end{equation}
with $a_1,\dots,a_s\in A$. We can isolate the $N$th coefficient by means of Cauchy's integral formula, which yields
\begin{equation}\label{eq: Waring rep old}
    R_{A,s}(N) = \frac{1}{2\pi i}\int_{\vert z\vert = \rho} \frac{f(z)^s}{z^{N+1}}\mathrm{d}z
\end{equation}
for any $\rho\in(0,1)$. In the original (Hardy--Ramanujan) version of the circle method this integral is evaluated by dividing the circle of integration into two disjoint sets, the `major arcs' and the `minor arcs'. Classically, the major arcs contribute to the main term and the minor arcs to the error term.

In 1928 Vinogradov introduced a helpful simplification by transferring the problem from complex analysis to Fourier analysis. 

The Fourier transform maps the set of integers $\Z$ onto the real unit interval $\R/\Z \simeq [0,1)$. For any integer $N$ we denote as before by $R_{A,s}(N)$ the number of representations of $N$ as a sum of $s$ elements in a finite set $A$. The inverse Fourier transform $\mathcal F^{-1} R_{A,s}$ of $R_{A,s}$ is then a function of $\alpha$ and is given by 
\[
    \mathcal F^{-1} R_{A,s} (\alpha) = \sum_{N \in \Z}R_{A,s}(N) e(\alpha N) = \sum_{a_1, \ldots, a_s \in A} e(\alpha(a_1+\dots+a_s)) = \left(\sum_{a \in A}e(\alpha a) \right)^s.
\]

It is convenient to put 
\[
    F(\alpha) = \sum_{a \in A} e(\alpha a),
\]
and the reader may note that with this definition we have $F(\alpha)=f(e(\alpha))$ where $f$ is the generating function considered above. 
We can now apply the forward Fourier transform and obtain 
\[
    \mathcal F \mathcal F^{-1} R_{A,s}(N) = \int_0^1 F(\alpha)^s e(-\alpha N) \mathrm{d} \alpha. 
\]
By the Fourier Inverse Theorem, we have $\mathcal F \mathcal F^{-1} R_{A,s}(N)=R_{A,s}(N)$, and thus we obtain the formula 
\begin{equation}\label{eq: Waring rep} 
    R_{A,s}(N) = \int_0^1 F(\alpha)^s e(-\alpha N) \mathrm{d}\alpha. 
\end{equation}

Clearly, since we assume $A$ to consist of non-negative integers, the equation \eqref{eq: Waring-eq 2} implies that without loss of generality the sum $F(\alpha)$ can be truncated at $a \le N$, as larger values trivially cannot contribute. 
We remark that if the set $A$ is allowed to include negative numbers as well, one typically includes some other truncation $a \le B$ for some large parameter $B$.

\subsection{The modern circle method: Main steps}

We are interested in the situation when the set $A$ is the set of $k$-th powers. In that case the exponential sum is given by 
\[F(\alpha) = \sum_{1 \le x \le P} e(\alpha x^k),\] 
where we put $P = \lfloor N^{1/k}\rfloor$. 

The main strategy is now to try to understand the integral in \eqref{eq: Waring rep} by studying the size of the integrand, and in particular the size of the exponential sums $F(\alpha)$ as $\alpha$ ranges over the unit interval.  For a typical (irrational) $\alpha$, the individual summands within the exponential sum $F(\alpha)$ are more or less equidistributed over the unit circle, and consequently the behaviour of the exponential sum $F(\alpha)$ is reminiscent of Brownian motion. In fact, \cite[Corollary 2.2]{CS} shows that $F(\alpha) \ll P^{1/2 + \epsilon}$ for all $\alpha$ in a set $\mathcal{L} \subset [0,1]$ with Lebesgue measure $1$. At the same time, when $\alpha$ is a rational number with denominator $q$, the summands inside $F(\alpha)$ can take at most $q$ distinct values on the unit circle. Thus, in particular for small values of $q$ there is significant potential for interference. The extreme case of this is when $\alpha=0=\frac01$, which gives the value $F(0)=P$. Moreover, since $F$ is continuous, this interference behaviour extends to suitably small neighbourhoods of rational numbers with small denominator.

This motivates the dissection of the interval $[0,1]$ into major and minor arcs, where the major arcs $\mathfrak{M}$ comprise all $\alpha$ that are close to a rational number with small denominator, so that $F(\alpha)$ is potentially large, and the minor arcs $\mathfrak{m}$ collect the remaining $\alpha$ that lack a similarly strong rational approximation.

The treatment of the major arcs is by now mainly standard. By using the fact that $\alpha\in\mathfrak{M}$ has a good approximation by a rational number $a/q$, we can write $\alpha = a/q + \theta$ for some small $\theta$. This leads to a factorization of the integral over the major arcs into a product of two terms: the `singular integral' and the `singular series', which then can be evaluated  to obtain the main term. Here, the singular integral can be interpreted as the volume of the solution set of the equation \eqref{eq: Waring equation} when viewed as a submanifold inside $\mathbb R^s$; this also provides the expected order of growth. Meanwhile, the singular series encodes all congruence information related to the equation \eqref{eq: Waring equation}. In fact, it can be factorised into an Euler product, where each factor describes the volume of the solution set of \eqref{eq: Waring equation} when viewed as a subset of the $p$-adic numbers $\mathbb Q_p$ as $p$ ranges over the primes. Thus, the canonical outcome of the circle method is an asymptotic formula with a main term of size $\asymp P^{s-k} \sim N^{s/k-1}$, which is modulated by a product of local factors that encode any local obstructions the problem might have. 

The bottleneck of the problem is the treatment of the minor arcs. The main difficulty is that although we have good control over the size of $F(\alpha)$ in an almost-all sense, our understanding in a \emph{pointwise} sense remains comparatively poor. Fortunately, however, for additive problems like the one in \eqref{eq: Waring equation} the contribution to \eqref{eq: Waring rep} that stems from the minor arcs can be estimated by 
\[
    \int_{\mathfrak m} F(\alpha)^se(-n\alpha) \mathrm{d}\alpha \le \sup_{\alpha \in \mathfrak m} |F(\alpha)| \int_0^1 |F(\alpha)|^{s-1} \mathrm{d}\alpha.
\]
In other words, it becomes less crucial to have strong pointwise bounds for $F(\alpha)$ if in addition we have good control over the \emph{average} behaviour of moments of $F$. Since $F$ exhibits square-root cancellation almost everywhere, this is a more tractable problem at least for small moments. For larger moments, however, this problem still presents formidable difficulties. 

A major breakthrough in the field was the resolution of the main conjecture associated with Vinogradov's mean value theorem (\cite{BDG,Woo16,Woo19}), which gives a near complete understanding of the average behaviour of moments of the related exponential sum 
\[
    G(\alpha) = \sum_{1 \le x \le P} e(\alpha_1 x + \alpha_2 x^2 + \cdots + \alpha_k x^k).
\]
In fact, the strongest available bounds for mean values of moments of $F$ are derived from these results (see \cite[Section~14]{Woo19}). These are the bounds we will exploit in our arguments below.

\subsection{Examples of counting Campana points by the circle method}\label{subsec: applications circle method to campana points} 
Here, we summarize instances in which the circle method is used to count Campana points of bounded height. So far, there are only a few examples of this. 

Van Valckenborgh and Browning were the first to use the circle method to count Campana points of bounded height. Much of the early investigations is centered around the set of Campana points for the boundary divisor $\frac 12\sum_{i=0}^n\{x_i=0\}$, corresponding to squareful numbers.  In this setting,  Van Valckenborgh \cite{VV12} gave an asymptotic formula for the number of integral points $(a_0:\dots:a_n)$ of bounded height on the hyperplane $\{\sum_{i=0}^n x_i=0\}$ in $\PP^n$ such that $a_i$ is squareful, provided that $n \ge 4$. 

The picture becomes more complicated for $ n \le 3$, as underlying geometric considerations begin to have an effect. In \cite{BVV12} Browning and Van Valckenborgh considered the case $n=2$ and investigated the number of positive coprime squareful numbers $x,y,z$  of bounded height $B$ that satisfy the equation $x+y=z$. In particular, they give a lower bound of order $O(B^{1/2})$ for the number of corresponding Campana points, which they conjecture to be sharp. Unfortunately, establishing the corresponding upper bound turns out to be significantly harder, and here they use a method rooted in the determinant method to give an upper bound of order $O(B^{3/5 + \epsilon})$. 
Finally, Shute \cite{Shute21} settled the case $n=3$ by adapting the delta-symbol method \cite{HB:96}. 
In this context, it is noteworthy that in order to put the problem in the framework of Manin's conjecture, the author has to exclude certain accumulating special subvarieties from the count, reinforcing the principle that geometric phenomena tend to have a disproportionate effect in small dimension. We can therefore summarise that in the setting of counting Campana points with boundary divisor $\frac 12\sum_{i=0}^n\{x_i=0\}$ on the hyperplane $x_0+ \cdots + x_n=0$ only the case $n=2$ is still open.

Meanwhile, Browning and Yamagishi generalized the work of Van Valckenborgh \cite{VV12} in \cite{BY21} by extending it to more general boundary divisors. Instead of squareful integers $a_i$ they consider more generally $m_i$-full integers, where $m_i\ge 2$. They further generalize 
to hypersurfaces $c_0x_0+\dots+c_{n-1}x_{n-1} = x_n$ 
for fixed nonzero integers $c_0,\dots,c_{n-1}$.
Similarly to \cite{VV12}, they give an asymptotic formula for the number of Campana points of bounded height in this setting by using the circle method under the assumption that the following condition is satisfied:
\[\sum_{\substack{0\le i\le n\\i\neq j}}\frac{1}{m_i(m_i+1)}\ge 1 \text{ for some } 0\le j\le n.\]
It turns out that considering these Campana points can be interpreted as studying Waring's problem for mixed exponents. Browning and Yamagishi used this to further prove an asymptotic formula for Waring's problem of mixed powers.

In this paper we extend the result of \cite{BY21} to diagonal hypersurfaces of degree $\geq 2$.

\section{Campana points}\label{sec: Campana points}

In this section we introduce the notion of Campana points that we 
use in Theorem \ref{thm: diagonal hypersurface}.
Campana introduced the notion of `orbifoldes g\'{e}om\'{e}triques' and `orbifold rational point' in his papers \cite{Ca04,Ca05,Ca11,Ca15}. Since then, several different definitions of Campana points appeared in the literature \cite{Ab09,AVA18}, which agree with the original definition of Campana on curves. As is explained in \cite{PSTVA21}, this is not the case for higher dimensional varieties, where the different definitions can lead to significant differences in the associated counting problems. All the papers just mentioned define Campana points on a regular integral model of the variety over the ring of $S$-integers of the field of definition, for some finite set of places $S$. Very recently, Mitankin, Nakahara and Streeter gave a definition of Campana points for \textsl{nonregular} models \cite{MNS22}. We will use this definition in this paper, since it allows us to directly interpret the counting problem in Theorem \ref{thm: diagonal hypersurface} 
in the setting of Campana points.

\begin{defn}
Let $F$ be any field. 
A \textsl{Campana orbifold over $F$} is a pair $(X,D)$ where
\begin{itemize}
    \item $X$ is a smooth proper variety over $F$, and 
    \item $D$ is an effective Weil $\Q$-divisor on $X$ defined over $F$ 
    satisfying
    $$D = \sum_{\alpha \in \calA} \epsilon_\alpha D_\alpha,$$
    where $\mathcal{A}$ is a finite index set, the $D_\alpha$'s are distinct prime divisors on $X$, and $\epsilon_\alpha$ belongs to the set
    \begin{equation*}
    \calM = \left\{ \left. 1 - \frac{1}{m} \right|  m \in \N, m\geq 2 \right\}.\end{equation*}
    
    Note that the $D_\alpha$'s are irreducible over $F$, but not necessarily over $\overline{F}$.
  We define $D_{\mathrm{red}}= \sum_{\alpha \in \calA} D_\alpha$, and say that $(X,D)$ is \textsl{smooth} if $D_{\mathrm{red}}$ is a strict normal crossing divisor on $X$. 
\end{itemize}
\end{defn}

\begin{remark}
In the existing literature, the set $\calM$ 
is usually taken to include 0 and 1. For the purposes of this paper we don't need these values, hence we omit them in our definition.
\end{remark}

Since Campana points depend on the choice of an integral model for the Campana orbifold, we need to define what a good choice for such an integral model is. From now on, we take $F$ to be a number field.

\begin{defn} Let $(X, D)$ be a Campana orbifold over $F$. 
Let $S \subset \Omega_F$ be a finite set of places of $F$ containing all the infinite places.
A \textsl{proper integral model} of $(X, D)$ over $\mathcal O_{F,S}$ is a pair $(\mathcal X,\mathcal D)$ such that $\calX$ is a flat proper model of $X$ over $\mathcal O_{F,S}$, and $\calD = \sum_{\alpha \in \calA} \epsilon_\alpha \calD_\alpha$, where $\calD_\alpha$ is the Zariski-closure of $D_\alpha$ in $\calX$. If $\mathcal X$ is regular, we say that $(\calX, \calD)$ is a \textsl{good integral model} of $(X,D)$.
\end{defn}

Finally, before we can define Campana points, we need to understand the `intersection behaviour' 
of a given point $P \in X(F)$ with the divisor $D$.

\begin{defn}\label{intersec}
Let $(X,D)$ be a Campana orbifold over $F$, and let $(\calX, \calD)$ be a proper 
integral model over $\calO_{F,S}$.
Take $P \in X(F)$. Since $\calX$ is proper, we have $X(F) = \calX(\calO_{F,S})$ and so $P$ extends uniquely to a point $\calP \in \calX(\calO_{F,S})$.  
Fix $v \notin S$ and let $\calP_v \in \calX(\calO_v)$ be the point corresponding to $P$. By definition, we can also view it as a map $\calP_v : \Spec(\calO_v) \to \calX$. Fix $\alpha \in \calA$. Since $\calD_{\alpha} \subset \calX$ is a closed subscheme, the fiber product $\calD_{\alpha} \times_{\calX} \Spec(\calO_v) \subset \Spec(\calO_v)$ is a closed subscheme corresponding to a non-zero ideal $I_{v,\mathcal P,\alpha}$ in $\calO_v$, using the correspondence between closed subschemes of $\Spec(\calO_{v})$ and ideals in $\calO_v$.  We distinguish two cases: 
\begin{itemize}
\item If $\calP_v\not\subseteq\mathcal{D}_{\alpha}$,   we define the
    \textsl{intersection multiplicity of $P$ and $\calD_\alpha$ at $v$} to be
    \[ n_v(\calD_\alpha, P) = \textrm{length}(\calO_v/I_{v,\mathcal P,\alpha}).\]
    Equivalently, since $\calO_v$ is a discrete valuation ring, say with a choice of uniformiser $\pi_v$ for its maximal ideal, it follows that the non-zero ideal $I_{v,\mathcal P,\alpha}$ can be written as
    \[ I_{v,\mathcal P,\alpha} = \left( \pi_v^{}\right)^{n_v(\calD_\alpha, P)}. \]
\item If $\calP_v\subseteq\mathcal{D}_{\alpha}$,  we set the \textsl{intersection multiplicity of $P$ and $\calD_\alpha$ at $v$} to be $+\infty$ (in this case, the ideal $I_{v,\mathcal P,\alpha}$ is just the zero ideal $(0)$ in $\calO_v$).
\end{itemize}

To summarise, the \textsl{intersection multiplicity of $P$ and $\calD_\alpha$ at $v$} is 
\[ 
n_v(\calD_\alpha, P) = 
\begin{cases}
 \textrm{length}(\calO_v/I_{v,\mathcal P,\alpha}) & \textrm{ if $\mathcal{P}_v \not\subseteq \mathcal{D}_{\alpha} $},\\
 + \infty & \textrm{ if $\mathcal{P}_v \subseteq \mathcal{D}_{\alpha} $}.
\end{cases}\]
\end{defn}

\begin{remark}\label{rem: intersection in regular models}
In practice, given a point $\calP_v:\Spec(\mathcal O_v)\to\calX$, and an open subset $U\subseteq\calX$ containing the image of $\calP_v$, such that $\mathcal D_\alpha|_{U}$ is principal and defined by a rational function $f_{\alpha,U}$ which is regular on $U$ (i.e., $\mathcal{D}_{\alpha}$ is Cartier in a neighbourhood of $\mathcal{P}_{v}$), we have $n_v(\calD_\alpha,P)=\val_v(f_{\alpha,U}(\calP_v))$, where $f_{\alpha,U}(\calP_v)$ is the image of $f_{\alpha,U}$ in $\calO_v$ under the ring homomorphism that defines $\calP_v:\Spec(\mathcal O_v)\to U$, and $\val_v$ is the $v$-adic valuation on $\mathcal{O}_v$. 
\end{remark}

We are now in the position to define Campana points.

\begin{defn}\label{def: campana points} We keep the notation as in Definition \ref{intersec}. For $\alpha\in\mathcal{A}$ we write $\epsilon_{\alpha}=1-\frac{1}{m_\alpha}$. A point $P \in X(F)$ is a  \textsl{Campana $\calO_{F,S}$-point on $(\calX, \calD)$} if, for every place $v \notin S$ and every $\alpha \in \calA$, we have  
either $n_v(\calD_\alpha, P) = 0$ or $n_v(\calD_\alpha,P) \geq m_\alpha$.
\end{defn}

\begin{remark} Different choices of the finite set of places $S \subset \Omega_F$ lead to potentially different sets of Campana points, and thus to potentially different counting problems.    
\end{remark}

\begin{example}\label{example}
Using Remark \ref{rem: intersection in regular models}, we describe Campana points for $F = \Q$ and $X\subseteq\mathbb{P}^n_{\Q}$, in the setting where the divisor $D_\alpha$ is the restriction to $X$ of a divisor $D'_\alpha\subseteq\mathbb P^n$ for all $\alpha\in\mathcal{A}$, and we have a model $\calX\subseteq\mathbb P^n_{\Z_{S}}$ for some set of places $S$ (for example, when $\calX$ is the Zariski closure of $X$ in $\mathbb P^n_{\Z_{S}}$). For all $\alpha\in\mathcal{A}$, let $f_\alpha \in\Z[x_0,\dots,x_n]$ be homogeneous with coprime coefficients (i.e., with content~1) such that $D_\alpha$ is defined by $f_\alpha$. Fix a point $P\in X(\Q)$ and write $P=(a_0:\dots:a_n)$ 
in projective homogeneous coordinates with $a_0,\dots,a_n\in \Z$ such that $\gcd(a_0,\dots,a_n)=1$. Let $\ell_P\in\Z[x_0,\dots,x_n]$ be a linear form with $\ell_P(a_0,\dots,a_n)=1$. Then the image of $\mathcal P$ in $\mathcal{X}$ is contained in the affine patch $U=\mathbb P^n_{\Z_s}\smallsetminus \{\ell_P=0\}$, so for all $\alpha\in\calA$ we can take $f_{\alpha,U}=f_\alpha/\ell_P^{\deg f_\alpha}$, and find $n_p(\calD_\alpha,P)=\val_p(f_{\alpha}(a_0,\dots,a_n))$ for every prime $p\notin S$. It follows that $P \in X(\Q)= \calX(\Z_S)$ is a Campana point precisely when, for all $\alpha \in \calA$, we have that $f_\alpha(a_0,\dots,a_n)$ is $m_\alpha$-full outside $S$.
\end{example}

\section{Campana points on diagonal hypersurfaces}\label{sec: campana points on diagonal hypersurfaces}
In this section we set up the problem of counting Campana points of bounded height on diagonal hypersurfaces in $\mathbb{P}^n_{\Q}$ with boundary divisor induced by the coordinate hyperplanes. We do this in two settings: first by constructing a proper integral model over $\mathbb{Z}$, and then by constructing a good integral model over $\mathbb{Z}_S$ for a suitable finite set of places $S$.
\subsection{A proper model}\label{sec: proper model}

Let $F = \Q$ and let $k \in \Z_{\geq 1}$. Let $X \subset \mathbb{P}^n_\Q$ be the diagonal hypersurface given by the equation
\[ \sum_{i=0}^n c_i x^k_i = 0,\]
where $c_0,\dots,c_n \in \Z_{\neq0}$, 
and $\gcd(c_0,\dots,c_n)=1$. We consider the effective $\Q$-divisor on $X$ given by 
\[ D = \sum_{i=0}^n\epsilon_iD_i,\]
where $D_i = \{ x_i = 0\}\cap X$ and $\epsilon_i$ belongs to the set $\calM$ 
for all $i\in\{0,\dots,n\}$. Then $(X,D)$ is a Campana orbifold over $\mathbb{Q}$. Write $\epsilon_i=1-\frac{1}{m_i}$ for all $i\in\{0,\dots,n\}$.
\begin{defn}
We denote by $(\mathcal{X}_1,\mathcal{D}_1)$ the proper integral model of $(X,D)$ given by the same equations for $X$ and $D$ over $\mathbb{Z}_S$, where $S=\{\infty\}$.  
\end{defn}
\begin{lemma}\label{lem: set of campana points}Let $B>0$ be a real number. With the model $(\mathcal{X}_1,\mathcal{D}_1)$ and the height induced by the Weil height on $\mathbb{P}^n(\mathbb{Q})$, the set of Campana $\mathbb{Z}$-points  on $X\cap \left(\mathbb{P}^n\smallsetminus \bigcup_{i=0}^n \{x_i=0\} \right)$ 
 of height bounded by $B$ is given by 
\begin{equation}\label{eq:set_camp_points}
   N(X,D,B) =  \left\{ (x_0: \dots: x_n) \in \mathbb P^n(\mathbb Q) \left| \begin{array}{c} 
   x_0,\dots,x_n\in \mathbb Z_{\neq 0},\\
   \gcd(x_0, \dots, x_n)=1,\\    
    x_i \textup{ is } m_i \textup{-full}\ \forall i\in\{0,\dots,n\},\\ 
    |x_0|, \dots,|x_n| \le B, \\
    c_0x_0^k+\dots +c_n x_n^k=0\end{array}  \right.\right\}.
\end{equation}
\end{lemma}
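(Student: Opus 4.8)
The plan is to unwind the definitions of Campana point, proper integral model, intersection multiplicity and height, and check that together they precisely carve out the set on the right-hand side of \eqref{eq:set_camp_points}.

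\textbf{Step 1: identify the points.} A point $P\in X\cap(\mathbb P^n\smallsetminus\bigcup_i\{x_i=0\})$ lying in $X(\mathbb Q)$ can be written uniquely (up to sign) in primitive integral homogeneous coordinates $P=(x_0:\dots:x_n)$ with $x_0,\dots,x_n\in\mathbb Z$ and $\gcd(x_0,\dots,x_n)=1$; the hypothesis that $P$ avoids all coordinate hyperplanes forces $x_i\neq 0$ for every $i$, and membership in $X$ gives $\sum_i c_ix_i^k=0$. This accounts for the first, second, fourth (equation) conditions on the right-hand side. Since $\mathcal X_1$ is proper over $\mathbb Z$, by Definition~\ref{intersec} we have $X(\mathbb Q)=\mathcal X_1(\mathbb Z)$, so $P$ extends uniquely to an integral point $\mathcal P$.

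\textbf{Step 2: compute the intersection multiplicities.} I would invoke Example~\ref{example} directly: here $S=\{\infty\}$, the boundary divisor $D_i$ is the restriction to $X$ of the coordinate hyperplane $D_i'=\{x_i=0\}\subset\mathbb P^n$, cut out by the content-$1$ linear form $f_i=x_i$, and $\mathcal X_1\subset\mathbb P^n_{\mathbb Z}$ is the Zariski closure of $X$. Choosing a linear form $\ell_P$ with $\ell_P(x_0,\dots,x_n)=1$, Example~\ref{example} gives $n_p(\mathcal D_{1,i},P)=\val_p(x_i)$ for every prime $p$. (One should also note $\mathcal P_p\not\subseteq\mathcal D_{1,i}$, i.e.\ the ideal is nonzero, which holds since $x_i\neq 0$.)

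\textbf{Step 3: translate the Campana condition.} By Definition~\ref{def: campana points}, $P$ is a Campana $\mathbb Z$-point of $(\mathcal X_1,\mathcal D_1)$ iff for every prime $p$ and every $i$, either $n_p(\mathcal D_{1,i},P)=0$ or $n_p(\mathcal D_{1,i},P)\ge m_i$; by Step~2 this says: for every prime $p\mid x_i$ we have $\val_p(x_i)\ge m_i$, i.e.\ $x_i$ is $m_i$-full (equivalently, ``$m_i$-full outside $S=\emptyset$'' in the terminology of Section~\ref{sec: notation}). This yields the third condition.

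\textbf{Step 4: the height condition.} Finally, the Weil height of $P=(x_0:\dots:x_n)$ in primitive integral coordinates is $H(P)=\max_i|x_i|=|\bx|$, so the induced height being at most $B$ is exactly $|x_0|,\dots,|x_n|\le B$, the fifth condition. Conversely, any tuple satisfying all five conditions defines such a Campana point, giving the reverse inclusion. I expect the only genuinely delicate point to be the careful bookkeeping in Step~2 — verifying that the local computation of Example~\ref{example} applies verbatim with the non-regular model $\mathcal X_1$ (it does, since that example only uses that $D_i$ is the restriction of a hypersurface in projective space and never regularity of $\mathcal X$), and checking the non-containment $\mathcal P_p\not\subseteq\mathcal D_{1,i}$ so that the intersection ideal is genuinely nonzero; everything else is a direct unwinding of definitions.
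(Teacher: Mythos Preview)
Your proof is correct and follows essentially the same approach as the paper: both reduce the only non-obvious condition (the Campana constraint) to the computation $n_p(\mathcal D_{1,i},P)=\val_p(x_i)$ via Example~\ref{example} with $f_i=x_i$ and a linear form $\ell_P$ satisfying $\ell_P(x_0,\dots,x_n)=1$. Your write-up is in fact more detailed than the paper's, which simply declares the height and equation conditions ``obvious'' and focuses only on the $m_i$-fullness equivalence.
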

\begin{proof}
Take $a_0,\ldots,a_n\in\mathbb{Z}_{\neq0}$ with $\gcd(a_0, \ldots, a_n) =1$, such that $P=(a_0:\cdots: a_n)$ is contained in $X\cap \left(\mathbb{P}^n\smallsetminus \bigcup_{i=0}^n \{x_i=0\} \right)$ and of height at most $B$. The only non-obvious condition to check in order to prove the lemma is that $P$ satisfies Definition~\ref{def: campana points} if and only if the third condition in $N(X,D,B)$ holds. Fix $i$ in $\{0,\ldots,n\}$. As in Example~\ref{example}, let $\ell_P$ be a linear form in $\mathbb{Z}[x_0,\ldots,x_n]$ with $\ell_P(a_0,\ldots,a_n)=1$. Then the divisor $\calD_i$ is Cartier on the open set $U=\mathbb{P}^n\smallsetminus\{\ell_P=0\}$, given by the rational function $x_i/\ell_P$. We find $n_p(\calD_i,P)=\text{val}_p(a_i)$, and by definition, $P$ is a Campana point if and only if $\text{val}_p(a_i)\in\mathbb{Z}_{\geq m_i}\cup\{0\}$, which is equivalent to $a_i$ being $m_i$-full. \end{proof}

\subsection{Bad primes and a regular model}\label{subsec:regular model}

The model $(\calX_1,\calD_1)$ over $\mathbb{Z}$ is not a good integral model of $(X,D)$ in general, since $\calX_1$ is not regular as a scheme over $\mathbb{Z}$. The following lemma gives sufficient conditions on a finite set $S$ of primes for $(\calX_1,\calD_1)$ to be \textsl{smooth} over $\mathbb{Z}_S$.

\begin{lemma}\label{lem: smooth model}
For $S$ a finite set of places of $\mathbb{Q}$ containing $\infty$ and all primes dividing $k$ and all $c_i$, the model $(\calX_2,\calD_2)$ of $(X,D)$ over $\mathbb{Z}_S$ defined by the same equations as $X$ and $D$ over $\mathbb{Z}_S$ is smooth. 
\end{lemma}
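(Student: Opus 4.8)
The plan is to check smoothness fibrewise over $\Spec \Z_S$: by the standard fibrewise criterion, since $\calX_2 \to \Spec \Z_S$ is flat and of finite type, it suffices to verify that each fiber $\calX_2 \otimes_{\Z_S} k(v)$ is smooth over $k(v)$, i.e.\ that $X_{\F_p}$ is a smooth hypersurface over $\F_p$ for every prime $p \notin S$, and that $X_{\Q}$ is smooth over $\Q$. The strict normal crossings condition on $(\calD_2)_{\mathrm{red}}$ must be handled separately but is essentially combinatorial given smoothness. First I would recall that $X_{\F_p}$ is cut out by $f = \sum_{i=0}^n c_i x_i^k$ in $\PP^n_{\F_p}$, and compute the partial derivatives $\partial f / \partial x_i = k c_i x_i^{k-1}$. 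Because $p \nmid k$ and $p \nmid c_i$ for all $i$ (this is exactly the hypothesis on $S$), each $k c_i$ is a unit in $\F_p$, so a common zero of $f$ and all the $\partial f/\partial x_i$ on $X_{\F_p}$ forces $x_i^{k-1} = 0$, hence $x_i = 0$, for every $i$ — which is impossible in $\PP^n$. Thus the singular locus of $X_{\F_p}$ is empty and $X_{\F_p}$ is smooth of dimension $n-1$; the same Jacobian computation (or simply base change) gives smoothness of the generic fiber $X_\Q$.

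Next I would address the divisor $\calD_2 = \sum_i \epsilon_i \calD_{2,i}$, where $\calD_{2,i}$ is the closure of $D_i = \{x_i = 0\} \cap X$ in $\calX_2$, i.e.\ the subscheme of $\PP^n_{\Z_S}$ cut out by $x_i = 0$ and $f = 0$. On a fiber over $p \notin S$, $(\calD_2)_{\mathrm{red}} \otimes \F_p = \bigcup_i (X_{\F_p} \cap \{x_i=0\})$; I would show that for each subset $I \subseteq \{0,\dots,n\}$ the intersection $\bigcap_{i \in I} \{x_i = 0\} \cap X_{\F_p}$ is either empty or smooth of the expected codimension $|I|$ in $X_{\F_p}$. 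Setting the $x_i$ with $i \in I$ to zero, the remaining equation $\sum_{i \notin I} c_i x_i^k = 0$ defines, by the very same Jacobian argument (all $kc_i$ units), a smooth hypersurface in the coordinate subspace $\PP^{n-|I|}$ — here one uses $|I| \le n-1$, since if $|I| \ge n$ the equation has no solutions in the remaining projective space. This gives the transversality needed for strict normal crossings; one also notes the prime divisors $\calD_{2,i}$ are pairwise distinct and irreducible. The generic fiber case is identical, and together with flatness of $\calX_2$ over $\Z_S$ (it is the closure of a flat generic fiber, or one invokes that a hypersurface defined by a single equation with unit-free content over a domain is flat) this yields that $(\calX_2, \calD_2)$ is a smooth proper integral model.

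The main obstacle I anticipate is not the Jacobian computation, which is routine, but rather being careful about what ``smooth'' means for the pair $(\calX_2, \calD_2)$ in the sense of the paper's Definition — namely that $(\calD_2)_{\mathrm{red}}$ is a strict normal crossings divisor \emph{relative to the base}, which means it must be snc on every geometric fiber simultaneously and with the components meeting transversally over $\Z_S$. This forces me to verify the codimension-$|I|$ transversality statement uniformly in $p$, and to confirm that no two of the closed subschemes $\calD_{2,i}$ share a component and that each is geometrically reduced; the latter again follows because $\{x_i = 0\} \cap X$ is a smooth (hence reduced) hypersurface in $\PP^{n-1}$ over each residue field by the argument above. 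Once these fibrewise smoothness and transversality facts are in hand, assembling them into the relative statement is formal.
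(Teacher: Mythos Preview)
Your argument for the smoothness of $\calX_2$ over $\Z_S$ is essentially identical to the paper's: flatness over the Dedekind base plus the Jacobian criterion applied to each geometric fiber, using that $p \nmid k c_0 \cdots c_n$ forces all partials $k c_i x_i^{k-1}$ to have no common projective zero. The paper's proof stops there: it establishes only that $\calX_2$ is smooth over $\Z_S$ (hence regular, yielding a \emph{good integral model}) and says nothing at all about the divisor $\calD_2$.

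Your second half, verifying that $(\calD_2)_{\mathrm{red}}$ has strict normal crossings fibrewise via the same Jacobian argument on the diagonal equations $\sum_{i \notin I} c_i x_i^k = 0$, is correct and goes beyond what the paper actually proves. Whether it is required depends on how one reads ``smooth'' in the statement: the paper's surrounding text and proof treat it as smoothness of the scheme $\calX_2$ over $\Z_S$, whereas you read it as smoothness of the orbifold pair in the sense of the paper's earlier Definition (i.e.\ $D_{\mathrm{red}}$ is snc). Your reading is arguably the more natural one given that the lemma is stated for the pair $(\calX_2,\calD_2)$, and your extra verification is both correct and cheap; but be aware that the paper itself does not carry it out.
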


\begin{proof} 
Let $\calX \subset \PP^n_\Z$ be the projective scheme over $\Spec \, \Z$ defined by the equation
\[ c_0 x_0^k + \cdots + c_n x_n^k = 0.\]
Notice that the structure morphism $s : \calX \to \Spec\, \Z$ is flat by \cite[Proposition 4.3.9]{Liu}, as $\calX$ is integral because $\gcd(c_0,\dots,c_n)=1$, $\Spec \,  \Z$ is a Dedekind domain, and $s$ is non-constant. Let $S$ be a finite set of places of $\Q$ containing $\infty$ and all the primes $p$ which divide $k \cdot \prod_{i=0}^n c_i$. Consider $\calX_S = \calX \times_{\Spec \,  \Z} \Spec(\Z_S)$.
Since fiber products preserve flatness, and since $s$ is flat,
the structure morphism $s_S: \calX_S \to \Spec(\Z_S)$ is flat as well.
Since $s_S$ is flat, by \cite[Theorem 10.2]{hartshorne} it suffices to show that the geometric fibers of $s_S$ are regular. The geometric generic fiber $\calX_{\overline \Q}$ is regular by the Jacobian criterion \cite[Exercise I.5.8]{hartshorne}, as $c_0,\dots,c_n\neq 0$, and since any Noetherian scheme is regular if and only if it is regular at its closed points \cite[Chapter 4, Corollary 2.17]{Liu}. For every prime number $p\notin S$, the geometric fiber $\calX_{\overline{\mathbb F}_p}$ over $(p)\in\Spec \, \mathbb Z$ is regular by the Jacobian criterion  \cite[Exercise I.5.8]{hartshorne}, as $p\nmid k\cdot \prod_{i=0}^n c_i$, and since any Noetherian scheme is regular if and only if it is regular at its closed points \cite[Chapter 4, Corollary 2.17]{Liu}.
\end{proof}

From Lemma \ref{lem: smooth model} it follows that we can always obtain a good integral model for $(X, D)$ by setting 
\[ S = \left\{ p \textrm{ prime} : p \  | \ k \cdot \prod_{i=0}^n c_i\right\} \cup \{ \infty\},\]
and defining $(\calX_2, \calD_2)$ over $\mathbb{Z}_S$ by the same equations for $X$ and $D_i$ over $\mathbb{Z}_S$. The set of Campana points on $X$ with respect to the model $\mathcal{X}_2$ is then similar to the set in Lemma \ref{lem: set of campana points}, but requiring the $x_i$-coordinate of the point to be $m_i$-full outside $S$ instead of just $m_i$-full. This leads to a larger set of Campana points than the one in Lemma \ref{lem: set of campana points}.

\vspace{11pt}

It is not obvious that in order to obtain a regular model we need to take the set $S$ as defined above (or larger); for certain choices of $X$ we might get regular models using smaller $S$ as well. However, we remark that primes dividing one or more of the coefficients $c_i$ can be potentially problematic and induce non-regular points on $\mathcal{X}_1$, as the following example shows.

\begin{example}
Let $k \geq 2$ be an integer. Let $p$ be a prime not dividing $k$. Let $\calX$ be defined by 
\[p^kx_0^k -x_1^k + x_2^k =0\subset\mathbb{P}^2_{\mathbb{Z}};\]
it is clear that e.g. the point $(1: p: 0)$ is on $\calX$. Notice that the Jacobian matrix over $\mathbb{F}_p$ vanishes at the point $(1: 0: 0)$, so $\calX$ is not smooth in the fiber over $\mathbb{F}_p$. 
 We now show that $\calX$ is not regular. 
 We work in the affine patch of $\mathbb{P}^2_{\mathbb{Z}}$ given by $\{x_0\neq 0 \}$, and we get the affine equation
 $p^k-y_1^k + y_2^k=0$.
Consider the maximal ideal
\[ \frakm = (p, y_1, y_2) \in \Spec\left( \frac{\Z[y_1, y_2]}{(p^k-y_1^k + y_2^k)}\right),\] which contains, for example, the prime ideal corresponding to the point $(y_1, y_2) = (p,0)$. 
Notice that $p^k-y_1^k + y_2^k$ is contained in $\frakm$. 
We have that $\Spec\left( \frac{\Z[y_1, y_2]}{(p^k-y_1^k + y_2^k)}\right)$ is regular at a point corresponding to $\frakm/(p^k-y_1^k + y_2^k)$ if and only if $p^k-y_1^k + y_2^k \notin \frakm^2$. But, since $k \geq 2$, it is clear that $p^k-y_1^k + y_2^k \in \frakm^2$. Hence, $\Spec\left( \frac{\Z[y_1, y_2]}{(p^k-y_1^k + y_2^k)}\right)$ is not regular at $\frakm$.\end{example}

The primes dividing the exponent $k$ can also be potentially problematic, as the following example shows.
\begin{example}
Let $k \geq 2$ be an integer. Let $p$ be a prime dividing $k$ and write $k = p \lambda$. We consider $\calX$ defined by 
\[x_0^k -x_1^k + x_2^k =0\subset\mathbb{P}^2_{\mathbb{Z}},\]
with the obvious point $(x_0:x_1: x_2) = (1:1: 0)$ on it.  First of all, we notice that the Jacobian matrix over $\mathbb{F}_p$ vanishes identically at every point. We work in the affine patch of $\mathbb{P}^2_{\mathbb{Z}}$ given by $\{x_0\neq 0 \}$, and we get the affine equation $1 -y_1^k + y_2^k=0.$
Consider the maximal ideal
\[ \frakm = (p, y_1 - 1, y_2) \in \Spec\left( \frac{\Z[y_1, y_2]}{(1-y_1^k + y_2^k)}\right),\] which contains, for example, the prime ideal corresponding to the point $(y_1, y_2) = (1,0)$. 

Notice that $1-y_1^k + y_2^k$ is contained in $\frakm$: indeed, $1-y_1^k + y_2^k = (1-y_1) (1+y_1 + \cdots+ y_1^{k-1}) + y_2^k$.
As in the previous example, we have that $\Spec\left( \frac{\Z[y_1, y_2]}{(1-y_1^k + y_2^k)}\right)$ is regular at a point corresponding to $\frakm/(1-y_1^k + y_2^k)$ if and only if $1-y_1^k + y_2^k \notin \frakm^2$. We now show that $1-y_1^k + y_2^k$ is in $\frakm^2$. 
Since $y_2^k \in \frakm^2$ (as $k \geq 2$), it suffices to show that $1-y_1^k \in \frakm^2$.

If $p$ is odd, we notice that
\[(y_1^\lambda -1)^p = y_1^k - 1 + p (-(y_1^\lambda)^{p-1} + y_1^\lambda ) + O(p^2)\]
and thus that
\[ y_1^k - 1 = (y^\lambda_1 - 1)^p +  p y_1^\lambda((y_1^\lambda)^{p-2} - 1 ) + O(p^2). \]
But $(y_1 -1)$ is a factor of $(y_1^\lambda -1)$, and thus $(y_1 -1)^2$ is a factor of $(y_1^\lambda -1)^p$. Moreover, since $\lambda (p-2) > 0$, we have that $(y_1 -1)$ is a factor of $((y_1^\lambda)^{p-2} - 1 )$. Hence, it follows that $1 - y_1^k \in \mathfrak{m}^2$.

If $p = 2$, then completing the square yields
\[ (y^\lambda_1)^2-1 = (y^\lambda_1-1)^2 + 2(y^\lambda_1 -1).\]
But $(y_1 - 1)$ is a factor of $(y_1^\lambda - 1)$. Hence, it follows that $1 - y_1^k \in \mathfrak{m}^2$.

In both cases, we conclude that $1-y_1^k+y_2^k$ is contained in $\mathfrak{m}^2$ and thus that $\Spec\left( \frac{\Z[y_1, y_2]}{(1-y_1^k + y_2^k)}\right)$ is not regular at $\frakm$.
\end{example}

\subsection{The associated counting problem} 
\label{sec: the counting problem}

We now set up the counting problem for Theorem \ref{thm: diagonal hypersurface} using the proper integral model introduced in Section \ref{sec: proper model}. The counting problem using the regular integral model in Section \ref{subsec:regular model} will be carried out in a forthcoming paper.

As in the statement of Theorem \ref{thm: diagonal hypersurface} and in Section \ref{sec: proper model}, we fix nonzero coprime integers $c_0,\dots,c_n$, and integers $k, m_0,\dots,m_n\geq 2$. Up to reordering the indices, we can assume that $m_0\leq \dots \leq m_n$.
We use freely the notation introduced in Section \ref{sec: notation}.
We consider the counting function
\begin{align*}
    N(B) = \# \left \{ \bx\in \Z_{\neq0}^{n+1} \left| \gcd(x_0, \dots, x_n)=1,\, |\bx| \le B,\, x_i \textup{ is } m_i\textup{-full } \forall i\in\{0,\dots,n\}, \,\sum_{i=0}^n c_i x_i^k =0 \right. \right \}
\end{align*}
analogous to \cite[(1.1)]{BY21}. 
Then 
\begin{equation}
\label{eq: step 1/2}
\#N(X,D,B)=\frac 12 N(B),
\end{equation}
as every point in $N(X,D,B)$ has exactly two representatives $(x_0,\dots,x_n)\in\Z^{n+1}$ satisfying the conditions in \eqref{eq:set_camp_points}.

In the rest of this section we rephrase the counting problem in order to apply the circle method. We follow the strategy of \cite[\S3]{BY21}.

Recall that for each $m\in\N$, an $m$-full integer $x \neq 0$ has a unique representation
\begin{align*}
    x=\pm u^m \prod_{r=1}^{m-1} v_r^{m+r},
\end{align*}
with $u, v_1, \dots, v_{m-1} \in \N$ such that $v_1,\dots,v_{m-1}$ are squarefree and pairwise coprime.
Thus, one can rewrite $N(B)$ as the cardinality of the set of tuples $\bx\in \Z_{\neq0}^{n+1}$ satisfying
\begin{gather}
    \gcd(x_0, \dots, x_n)=1, \quad
    |\bx| \le B, \\
    x_i =\pm u_i^{m_i} \prod_{r=1}^{m_i-1} v_{i,r}^{m_i+r} \quad \forall i\in\{0,\dots,n\}, \label{eq:m_j-full}\\
    \mu^2(v_{i,r})=1, \quad \gcd(v_{i,r}, v_{i,\tilde r})=1, \quad \forall i\in\{0,\dots,n\}, \forall r,\tilde r\in\{1,\dots,m_i-1\}, r\neq\tilde r, \label{eq:v_jr_sqfree+gcd}\\
    \sum_{i=0}^n c_i x_i^k =0. \label{eq:equation}
\end{gather}

 Put $\Lambda = \sum_{i=0}^n(m_i-1)$. For integer vectors $\bd = (d_0,\dots,d_n) \in \Z_{\neq 0}^{n+1}$ as well as $\bs=(s_0,\dots,s_n)\in\N^{n+1}$ and $\bt=(t_{i,r})_{0\leq i\leq n, 1\leq r\leq m_i-1}\in\N^\Lambda$, 
let 
\begin{align*}
   N_{\bd}(B, \bs, \bt)
   = \left\{ \bx\in (\N \cap [1, B])^{n+1} \left| \begin{array}{c}
    d_0 x_0^k+ \cdots + d_n x_n^k=0, \ 
    \eqref{eq:m_j-full}, \ \eqref{eq:v_jr_sqfree+gcd}, \\ 
s_i| u_i,\ t_{i,r}| v_{i,r} \ \forall i\in\{0,\dots,n\}, \forall r\in\{1,\dots,m_i-1\} \end{array}\right.\right\},
\end{align*}
and write $N_{\bd}^*(B, \bs, \bt)=N_{\bd}(B, \bs, \bt)\cap \Z_{\textup{prim}}^{n+1}$. 
The sets $N^*_{\bd}(B, \bs, \bt)$ can be used to rewrite $N(B)$ again. It is easy to see that 
\begin{align}
\label{eq: step odd}
    N(B)= \begin{cases} {\displaystyle{\sum_{\beps\in \{\pm 1 \}^{n+1}} \# N_{\beps \bc}^*(B, \bone, \bone ) }}& \text{ if $k$ is odd}, \\
    2^{n+1} \# N^*_{\bc}(B, \bone, \bone )& \text{ if $k$ is even},
    \end{cases}
\end{align}
where we wrote $\beps \bc = (\varepsilon_0 c_0, \ldots, \varepsilon_n c_n)$.
The counting function $N^*_{\bd}(B, \bone,\bone)$ can be expressed in terms of $N_{\bd}(B, \bs, \bt)$ by means of the inclusion-exclusion principle. Indeed, it was shown in \cite[Section~3]{BY21} that there exists a function $\varpi: \N^{n+1} \times \N^\Lambda \to \Z$ such that  
\begin{align}
\label{eq: step prim}
    \# N_{\bd}^* (B, \bone, \bone) = \sum_{\bs\in\N^{n+1}} \sum_{\bt\in\N^\Lambda} \varpi(\bs, \bt) \# N_{\bd} ( B, \bs, \bt ).
\end{align}
Moreover, they show that the function $\varpi$  has the following properties. 
\begin{lemma}[Lemma~3.2 in \cite{BY21}]\label{lem: varpi}
    Let $(\bs,\bt) \neq (\bone,\bone)$.
    Then the following are true.
    \begin{enumerate}
        \item $\varpi(\bs,\bt)=0$ if any one of the coordinates of $\bs$ or $\bt$ is divisible by $p^2$; 
        \item $\varpi(\bs,\bt)=0$ if one of the coordinates of $\bs$ or $\bt$ is divisible by $p$, but there exists $0 \le i \le n$ with $p \nmid s_i t_{i,1}\cdots t_{i,m_i-1}$.
    \end{enumerate}
    Moreover, $\varpi(\bs,\bt) \ll (|\bs| |\bt|)^\epsilon$ for all $\bs \in \N^{n+1}$, $\bt \in \N^{\Lambda}$ and $\epsilon>0$, where the implicit constant can depend on $\epsilon$.
\end{lemma}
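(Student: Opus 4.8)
The plan is to unpack the function $\varpi$ implicit in \eqref{eq: step prim} — which arises from Möbius inversion on the primitivity condition $\gcd(x_0,\dots,x_n)=1$ — and then read off all three assertions from its explicit shape as a signed count. First I would record the elementary fact that, by the unique factorisation \eqref{eq:m_j-full} together with \eqref{eq:v_jr_sqfree+gcd}, a prime $p$ divides $x_i$ if and only if $p\mid u_i$ or $p\mid v_{i,r}$ for some $r\in\{1,\dots,m_i-1\}$. Hence $\gcd(x_0,\dots,x_n)=1$ fails precisely when some prime divides, for \emph{every} index $i$, at least one of $u_i,v_{i,1},\dots,v_{i,m_i-1}$. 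Since $|\bx|\le B$ makes only finitely many primes relevant, Möbius inversion over the squarefree common divisor $q$ gives
\begin{equation*}
    \#N_\bd^*(B,\bone,\bone)=\sum_{q\text{ squarefree}}\mu(q)\,\#\bigl\{\bx\in N_\bd(B,\bone,\bone):q\mid x_i\ \forall i\bigr\}.
\end{equation*}

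For fixed squarefree $q$ I would then expand the inner count: writing $\mathbf 1[q\mid x_i]=\prod_{p\mid q}\mathbf 1[p\mid x_i]$ and applying inclusion--exclusion to each factor $\mathbf 1[p\mid x_i]=\mathbf 1[\,p\mid u_i\text{ or }p\mid v_{i,r}\text{ for some }r\,]$ (noting that, by pairwise coprimality, $p$ can lie in at most one $v_{i,r}$), the inner count becomes a signed sum of terms $\#\{\bx:s_i\mid u_i,\ t_{i,r}\mid v_{i,r}\ \forall i,r\}=\#N_\bd(B,\bs,\bt)$, where $s_i$ (resp.\ $t_{i,r}$) is the product of the primes $p\mid q$ assigned to the slot $u_i$ (resp.\ $v_{i,r}$). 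Collecting, for each $(\bs,\bt)$, the signs $\mu(q)\prod_{i,\,p\mid q}(\pm1)$ over all pairs $(q,\text{slot assignment})$ that produce $(\bs,\bt)$ \emph{defines} the integer $\varpi(\bs,\bt)$, with $\varpi(\bone,\bone)$ coming from $q=1$.

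From this description the three claims are inspections. For (1): every $s_i,t_{i,r}$ arising in an assignment is a product of distinct primes dividing $q$, hence squarefree; so if a coordinate of $\bs$ or $\bt$ is divisible by $p^2$ there is no contributing pair and $\varpi(\bs,\bt)=0$. For (2): if $p$ divides some coordinate of $\bs$ or $\bt$ then $p\mid q$ in every contributing pair, so $p\mid x_i$ for all $i$, so $p$ occupies some slot at every index, i.e.\ $p\mid s_it_{i,1}\cdots t_{i,m_i-1}$ for all $i$; contrapositively, the existence of an $i$ with $p\nmid s_it_{i,1}\cdots t_{i,m_i-1}$ forces $\varpi(\bs,\bt)=0$. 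For the bound: by (2) the primes dividing any contributing $q$ divide each $s_it_{i,1}\cdots t_{i,m_i-1}$, hence divide $|\bs||\bt|$; given $(\bs,\bt)$, the number of pairs $(q,\text{assignment})$ producing it is at most $\prod_{p\mid|\bs||\bt|}(\text{boundedly many slot choices at each of the }n+1\text{ indices})$, which is $O_\epsilon\bigl((|\bs||\bt|)^\epsilon\bigr)$ by $d(N)\ll_\epsilon N^\epsilon$; as $\varpi(\bs,\bt)$ is a $\pm1$-weighted sum over these pairs, $|\varpi(\bs,\bt)|\ll_\epsilon(|\bs||\bt|)^\epsilon$. (In fact one can check that the contributing pair is unique when it exists — the prime set of $q$ must equal the common value of the sets $\{p:p\mid s_it_{i,1}\cdots t_{i,m_i-1}\}$, which then determines $q$ and the assignment — so $\varpi\in\{-1,0,1\}$; but the weaker bound is all that is needed.)

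The main obstacle is the bookkeeping in the second paragraph: making precise the correspondence between pairs $(q,\text{assignment of the primes of }q\text{ to the slots }u_i,v_{i,r})$ and divisibility vectors $(\bs,\bt)$, so that the inclusion--exclusion signs assemble into a well-defined integer independent of how one organises the expansion. The delicate point is that a single prime $p\mid q$ may be forced into the $u_i$-slot at some indices, a $v_{i,r}$-slot at others, and into \emph{both} $u_i$ and some $v_{i,r}$ at the same index (this overlap already occurs for $m_i$-full numbers when $\val_p(x_i)$ is not a multiple of $m_i$), so the inclusion--exclusion has to be carried out one prime at a time, with a product over $p\mid q$, to avoid double counting. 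Once $\varpi$ is written as this explicit signed count, properties (1)--(2) and the bound follow as above; alternatively, since the statement is exactly \cite[Lemma~3.2]{BY21}, one may simply cite it.
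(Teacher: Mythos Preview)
The paper does not prove this lemma at all; it is stated as \cite[Lemma~3.2]{BY21} and used as a black box. Your closing remark --- that one may simply cite \cite{BY21} --- is precisely what the paper does, so in that sense your proposal already matches the paper's treatment. The M\"obius/inclusion--exclusion sketch you give is essentially the construction carried out in \cite[\S3]{BY21} that defines $\varpi$, and properties (1), (2) and the $\epsilon$-bound follow from it as you indicate; your parenthetical observation that the contributing $(q,\text{assignment})$ pair is unique (whence $\varpi\in\{-1,0,1\}$) is also correct once one notes that, in addition to (1) and (2), a nonzero contribution forces $\gcd(t_{i,r},t_{i,r'})=1$ for $r\neq r'$.
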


   For every $\bx\in N_{\bd}(B,\bs,\bt)$, we write $u_i = s_i\tilde u_i$ and $v_{i,r} =  t_{i,r}\tilde v_{i,r}$ for  $i\in\{0,\dots,n\}$ and $r\in\{1,\dots,m_i-1\}$. Let 
    \begin{align}\label{eq: gammaj}
    \gamma_i = s_i^{km_i} \prod_{r=1}^{m_i-1} t_{i,r}^{k(m_i+r)} \tilde v_{i, r}^{k(m_i+r)} \qquad \forall i\in\{0,\dots,n\}.
\end{align}
Then
\begin{align*}
    x_i^k =\gamma_i \tilde u_i^{k m_i} \qquad \forall i\in\{0,\dots,n\}.
\end{align*}

For a vector $\bzeta \in \N^{n+1}$, let

\begin{align*}
M_{\bd, \bzeta}(B^k)=\# \left \{ \tilde \bu \in \N^{n+1} \left| \zeta_i \tilde u_i^{k m_i} \le B^k \ \forall i\in\{0,\dots,n\}, \sum_{i=0}^n d_i \zeta_i \tilde u_i^{k m_i}=0\right. \right \}.
\end{align*}
For any pair $(\bs,\bt) \in \N^{n+1} \times \N^\Lambda$ and any $R \in \R_{>0} \cup \{\infty\}$ define the set 
\begin{align*}
    V_R(\bs,\bt) = \left\{\tilde \bv\in\N^\Lambda\left| \begin{array}{c} s_i^{km_i} \prod_{r=1}^{m_i-1} t_{i,r}^{k(m_i+r)} \tilde v_{i, r}^{k(m_i+r)} \le R^k \quad \forall i \in \{0,\ldots, n\}\\ \mu^2(\tilde v_{i,r} t_{i,r})=1 \quad \forall i\in \{0,\ldots, n\}, r\in \{1,\ldots, m_i-1\}\\ \gcd(\tilde v_{i,r} t_{i,r},\tilde v_{i,r'} t_{i,r'})=1 \quad \forall i, r, r' \text{ with } r \neq r'\end{array} \right.\right\}.
\end{align*}

Thus 
\begin{align*}
    \# N_{\bd} ( B, \bs, \bt )= \sum_{ \tilde \bv \in V_B(\bs, \bt) } M_{\bd, \bgamma}(B^k),
\end{align*}
where $\bgamma$ is given by \eqref{eq: gammaj}.

For $R \in \R_{>0} \cup \{\infty\}$ we define further
\begin{align*}
    \calT_R = \left\{\bs\in\N^{n+1}, \bt\in\N^\Lambda \left| \begin{array}{c}
    s_i^{m_i} \prod_{r=1}^{m_i-1} t_{i,r}^{m_i+r} \le R \\
    \mu^2(s_i)=\mu^2(t_{i,r})=1 \quad \forall i\in \{0,\ldots, n\}, r\in \{1,\ldots, m_i-1\}\\
    \gcd(t_{i,r}, t_{i,r'})=1 \quad \forall i\in \{0,\ldots, n\}, r \neq r'\in \{1,\ldots, m_i-1\}\\
    p \mid s_j t_{j,1} \cdots t_{j,m_j-1} \implies p \mid s_i t_{i,1} \cdots t_{i,m_i-1} \quad \forall i,j \in \{0, \ldots, n\}
    \end{array}\right.\right\}.
\end{align*}

We can thus rewrite the relation \eqref{eq: step prim} in the shape
\begin{align}
\label{eq: step M_cgamma}
    \# N_{\bd}^* (B, \bone, \bone) =\sum_{(\bs, \bt) \in \calT_B} \varpi(\bs, \bt)  \sum_{\tilde \bv \in V_B(\bs,\bt)}M_{\bd, \bgamma}(B^k) 
\end{align}
due to the properties of the function $\varpi$ in Lemma~\ref{lem: varpi}.

The functions $M_{\bd, \bgamma}(B^k)$ can be estimated via \cite[Theorem 2.7]{BY21} whenever $\sum_{i=0}^{n-1}\frac 1{km_i(km_i+1)}\geq 1$. In the next section we improve the circle method result of Browning and Yamagishi to a version that applies under the weaker assumptions of Theorem \ref{thm: diagonal hypersurface}.

\section{Application of the circle method}\label{sec: circle method result}
Here we prove a sharper version of \cite[Theorem 2.7]{BY21} with parameters $N=0, H=1, \bh=0$.
Fix integers $2\le \tilde m_0 \le \dots \le  \tilde m_n$.
For $\bd\in\Z^{n+1}_{\neq0}$, $\bzeta\in\N^{n+1}$ and $\tilde B>0$, put 
\begin{align}\label{def: Mcgamma}
    \tilde M_{\bd, \bzeta}( \tilde B) = \# \left \{ \bu \in \N^{n+1} \left |     \zeta_i u_i^{\tilde m_i} \le \tilde B \ \forall i\in\{0,\dots,n\}, \, \sum_{i=0}^{n} d_i \zeta_i u_i^{\tilde m_i}=0 \right. \right\}. 
\end{align}

Thus, if $\tilde m_i = k m_i$ for $0 \le i \le n$ we have $M_{\bd,\bzeta}(B^k) = \tilde M_{\bd,\bzeta}(B^k)$.

Our next immediate goal is to establish an asymptotic formula for $\tilde M_{\bd, \bgamma}( \tilde B)$. Here, we use the Hardy--Littlewood circle method much in the way of \cite[Section 2]{BY21}, but we aim for a slightly sharper bound than the one presented in that paper. 

For any integer $m\geq 2$, let $s_0(m)$ be a positive integer such that for all $s \ge s_0(m)$ and $\epsilon>0$ one has
$$
    \int_0^1 \left| \sum_{1 \le x \le X} e(\alpha x^{m}) \right|^{2s} \mathrm{d} \alpha \ll X^{2s-m+\epsilon}. 
$$
Similarly, let $\sigma(m)$ be a positive integer with the property that for any $\beta \in [0,1)$ and any $q \in \N$ with $\|q \beta\| \le q^{-1}$  
one has the bound 
$$	
    \left| \sum_{1 \le x \le X} e(\beta x^{m}) \right| \ll X^{1+\epsilon} (X^{-1} + q^{-1} + qX^{-m})^{\sigma(m)}
$$
for all $\epsilon>0$.  
It follows from \cite[Theorem 2.1]{BP21} (see also \cite[Theorem 14.8]{Woo19} for the bound on $s_0(m)$) that the choices 
\begin{align*}
    s_0(m) = \min\{ 2^{m-1},  \textstyle{\frac12} m (m-1) + \lfloor \sqrt{2m+2}\rfloor \} \quad \text{ and } \quad \sigma(m)^{-1} = 2 s_0(m) 
\end{align*}
are admissible. 
(We remark here that even $\sigma(m)^{-1} = 2 s_0(m-1)$ is admissible. 
Since $s_0$ is strictly increasing, choosing $\sigma(m)$ as above constitutes a weakening.)

Suppose for the remainder of the section that the parameters $\bd\in\Z^{n+1}_{\neq0}$, $\bzeta\in\N^{n+1}$ and $\tilde B>0$ are fixed. For every $i\in\{0,\dots,n\}$, 
let $\tilde B_i=(\tilde B/\zeta_i)^{\frac 1{\tilde m_i}}$ and 
$$S_i(\alpha)=\sum_{1\leq u\leq \tilde B_i}e(\alpha d_i \zeta_i u^{\tilde m_i}),
$$ 
so that 
\[
    \tilde M_{\bd,\bzeta}(\tilde B) = \int_0^1\prod_{i=0}^n S_i(\alpha)\mathrm{d}\alpha.
\]
For fixed $\delta>0$, we recall the set of minor arcs from \cite[\S2]{BY21}, which is given by 
\begin{align*}
    \mathfrak m = [0,1]\smallsetminus \bigcup_{\substack{0\leq a\leq q\leq {\tilde B}^\delta\\ \gcd(a,q)=1}}\{\alpha\in[0,1):  |\alpha- a/q|<{\tilde B}^{\delta-1}\}.
\end{align*}

\begin{lemma}\label{lem:Weyl-bound}
	For $i\in\{0,\dots,n\}$, $0<\delta<\frac 1{(2n+5)\tilde m_n(\tilde m_n+1)}$ and $\epsilon > 0$, we have 
	$$
    	\sup_{\alpha \in \mathfrak m} |S_{i}(\alpha)| 
        \ll \tilde B^{1/\tilde m_i - \delta \sigma(\tilde m_i) + \epsilon} 
        \zeta_i^{-1/\tilde m_i + \sigma(\tilde m_i)}.
	$$
\end{lemma}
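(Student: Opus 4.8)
The plan is to treat $S_i(\alpha)$ as a univariate Weyl sum and feed it into the pointwise bound built into the definition of $\sigma(\tilde m_i)$; the only real work is to show that $\alpha\in\mathfrak m$ forces the relevant rational approximation to have a large denominator. First I would dispose of the trivial case: if $\zeta_i>\tilde B$ then $\tilde B_i<1$, the sum $S_i(\alpha)$ is empty and the bound is vacuous, so I may assume $\zeta_i\le\tilde B$ and hence $\tilde B_i\ge 1$. Since $d_i\ne 0$ and $\zeta_i\ge 1$, setting $\beta=\alpha d_i\zeta_i$ we have $S_i(\alpha)=\sum_{1\le u\le\tilde B_i}e(\beta u^{\tilde m_i})$, which depends on $\beta$ only modulo $1$; so I may replace $\beta$ by its fractional part and assume $\beta\in[0,1)$.

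Next I would apply Dirichlet's approximation theorem with parameter $Q=2\tilde B^{1-\delta}$ to obtain coprime integers $a,q$ with $1\le q\le Q$ and $\|q\beta\|\le Q^{-1}\le q^{-1}$. The crucial claim is that $q>\tilde B^{\delta}/(|d_i|\zeta_i)$. Suppose not; then $q|d_i|\zeta_i\le\tilde B^{\delta}$, and choosing $a$ with $|q\beta-a|\le Q^{-1}$ gives
\[
    \left|\alpha-\frac{a}{q\,d_i\zeta_i}\right|\le\frac{1}{q|d_i|\zeta_i\,Q}\le\frac12\,\tilde B^{\delta-1}<\tilde B^{\delta-1}.
\]
Writing $a/(q\,d_i\zeta_i)$ in lowest terms yields a reduced fraction with denominator at most $q|d_i|\zeta_i\le\tilde B^{\delta}$, and a short argument (dealing with reduced fractions that land slightly outside $[0,1]$ via the arcs around $0$ and $1$) then places $\alpha$ in one of the arcs excluded from $\mathfrak m$, a contradiction. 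Hence $q^{-1}<|d_i|\zeta_i\,\tilde B^{-\delta}$.

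Finally I would apply the bound defining $\sigma(\tilde m_i)$ with $X=\tilde B_i$ and $m=\tilde m_i$, obtaining
\[
    |S_i(\alpha)|\ll\tilde B_i^{\,1+\epsilon}\bigl(\tilde B_i^{-1}+q^{-1}+q\tilde B_i^{-\tilde m_i}\bigr)^{\sigma(\tilde m_i)},
\]
and I would bound each term in the bracket by $\zeta_i\tilde B^{-\delta}$, up to a constant depending only on $d_i$: the middle term by the claim above; the last because $q\tilde B_i^{-\tilde m_i}=q\zeta_i\tilde B^{-1}\le Q\zeta_i\tilde B^{-1}=2\zeta_i\tilde B^{-\delta}$; and the first because $\tilde B_i^{-1}=\zeta_i^{1/\tilde m_i}\tilde B^{-1/\tilde m_i}\le\zeta_i\tilde B^{-\delta}$, which holds since $\zeta_i\ge 1$ and $0<\delta<1/\tilde m_i$ (the latter being implied by the hypothesis $\delta<\frac{1}{(2n+5)\tilde m_n(\tilde m_n+1)}$). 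Substituting $\tilde B_i=(\tilde B/\zeta_i)^{1/\tilde m_i}$ and using $\zeta_i\ge 1$ to absorb the $\epsilon$-exponents then gives $|S_i(\alpha)|\ll\tilde B^{1/\tilde m_i-\delta\sigma(\tilde m_i)+\epsilon}\zeta_i^{-1/\tilde m_i+\sigma(\tilde m_i)}$; since every estimate is uniform in $\alpha\in\mathfrak m$, the same bound holds for the supremum.

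The main obstacle is the lower bound on $q$: one must transfer a good rational approximation of $\beta=\alpha d_i\zeta_i$ to one of $\alpha$, whose (reduced) denominator is inflated by at most the factor $|d_i|\zeta_i$, choose $Q$ so that the resulting inequality comes out strict, and keep the $\zeta_i$-dependence fully explicit so that the implicit constant in the final estimate depends only on $d_i$ (equivalently $c_i$) and $\tilde m_i$, and not on the varying quantity $\zeta_i=\gamma_i$. The remaining steps are routine manipulations subject only to the constraint $\delta<1/\tilde m_i$.
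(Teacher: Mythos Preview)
Your proof is correct and follows essentially the same approach as the paper, which simply cites \cite[Lemma~2.5]{BY21}; you have written out the standard argument that underlies that citation (Dirichlet approximation to force a large denominator on $\mathfrak m$, followed by the Weyl-type pointwise bound encoded in $\sigma(\tilde m_i)$), with the only novelty being the updated meaning of $\sigma(\tilde m_i)$, exactly as the paper indicates.
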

\begin{proof} 
	The proof is identical to that of \cite[Lemma 2.5]{BY21} (see in particular the last display on p.1083), with the only 
 difference in the precise meaning of the quantity $\sigma(\tilde m_i)$.
\end{proof}

Let
\begin{equation}\label{eq:Theta and Gamma}
	\tilde \Theta = \sum_{i=0}^{n} \frac{1}{2s_0(\tilde m_i)} - 1 \qquad \text{and} \qquad \tilde \Gamma = \sum_{i=0}^{n} \frac{1}{\tilde m_i} - 1.
 \end{equation}

\begin{lemma} 
\label{lem:minor arcs}
	Assume that 
	$
	   \tilde \Theta>0.
	$
	Let $0<\delta<\frac 1{(2n+5)\tilde m_n(\tilde m_n+1)}$ and $\epsilon>0$. Then 
	$$
	   \int_{\mathfrak m} \left| \prod_{i=0}^n S_i(\alpha)\right| \mathrm{d} \alpha \ll  \tilde B^{\tilde \Gamma - \delta \tilde \Theta + \epsilon}\prod_{i=0}^{n} \zeta_i^{1/(2s_0(\tilde m_i))-1/\tilde m_i}.
	$$
\end{lemma}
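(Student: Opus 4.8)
The plan is to bound the minor arc integral by combining the pointwise Weyl bound from Lemma~\ref{lem:Weyl-bound} with a H\"older-type splitting that allocates to each factor $S_i$ an exponent governed by the admissible mean-value bound for the exponent $\tilde m_i$. Concretely, I would use the estimate
\[
\int_{\mathfrak m}\left|\prod_{i=0}^n S_i(\alpha)\right|\mathrm{d}\alpha
\le \left(\prod_{i=0}^n \sup_{\alpha\in\mathfrak m}|S_i(\alpha)|^{1-\theta_i}\right)
\int_0^1 \prod_{i=0}^n |S_i(\alpha)|^{\theta_i}\,\mathrm{d}\alpha,
\]
where the $\theta_i\in[0,1]$ are chosen so that $\sum_i \theta_i/(2s_0(\tilde m_i))^{-1}\ge 1$, i.e.\ the product of the $|S_i|^{\theta_i}$ can be controlled by a generalised H\"older inequality against the individual $2s_0(\tilde m_i)$-th moment bounds $\int_0^1|S_i(\alpha)|^{2s_0(\tilde m_i)}\mathrm{d}\alpha\ll \tilde B_i^{2s_0(\tilde m_i)-\tilde m_i+\epsilon}\zeta_i^{\cdots}$ that come from the definition of $s_0$ after rescaling $S_i$ by the substitution $u\mapsto$ a variable of size $\tilde B_i$. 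The hypothesis $\tilde\Theta>0$ is exactly what makes such a choice of $\theta_i$ with each $\theta_i<1$ possible: since $\sum_i 1/(2s_0(\tilde m_i))=\tilde\Theta+1>1$, one can shrink each weight slightly below $1$ and still have the sum of the rescaled weights at least $1$.

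First I would record the trivial/mean-value input for a single factor: after the change of variables turning $S_i(\alpha)$ into $\sum_{1\le u\le \tilde B_i}e(\alpha d_i\zeta_i u^{\tilde m_i})$ and tracking how $d_i\zeta_i$ interacts with the definition of $s_0(\tilde m_i)$ (the coefficient $d_i\zeta_i$ only permutes the frequencies, so the $2s_0(\tilde m_i)$-th moment is still $\ll \tilde B_i^{2s_0(\tilde m_i)-\tilde m_i+\epsilon}$), and then rewrite $\tilde B_i=(\tilde B/\zeta_i)^{1/\tilde m_i}$ to express everything in $\tilde B$ and $\zeta_i$. Then I would apply H\"older's inequality in the form $\int_0^1\prod_i|S_i|^{\theta_i}\le \prod_i(\int_0^1|S_i|^{\theta_i p_i})^{1/p_i}$ with $p_i=(2s_0(\tilde m_i))/\theta_i\cdot$(normalisation) chosen so that $\sum 1/p_i=1$ and $\theta_i p_i=2s_0(\tilde m_i)$; the condition $\tilde\Theta>0$ guarantees $\sum_i\theta_i\le n+1-\tilde\Theta/(\max_i 2s_0)$ or more simply that valid weights exist, and the natural symmetric choice is $\theta_i=1/((2s_0(\tilde m_i))\cdot(\tilde\Theta+1))^{-1}$ — I would pick $\theta_i$ so that $1-\theta_i$ is a fixed small multiple of $1/(2s_0(\tilde m_i))$ times $\tilde\Theta$, making the exponent of $\tilde B$ coming from the supremum factors equal to $-\delta\tilde\Theta$ after using $\sigma(\tilde m_i)^{-1}=2s_0(\tilde m_i)$.

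The bookkeeping of exponents is the technical heart. For the $\tilde B$-exponent: each $\sup|S_i|$ contributes $(1-\theta_i)(1/\tilde m_i-\delta\sigma(\tilde m_i))$ by Lemma~\ref{lem:Weyl-bound}, and the moment part contributes $\theta_i\cdot(\text{something})$; summing and using $\sum\theta_i/(2s_0(\tilde m_i))=1$ (the boundary case of the H\"older exponent sum) should collapse the main $\tilde B$-exponent to $\sum_i 1/\tilde m_i-1=\tilde\Gamma$, while the $\delta$-terms from the suprema, weighted by $(1-\theta_i)\sigma(\tilde m_i)=(1-\theta_i)/(2s_0(\tilde m_i))$, sum to $\delta\tilde\Theta$ with the right sign, giving the claimed $\tilde B^{\tilde\Gamma-\delta\tilde\Theta+\epsilon}$. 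For the $\zeta_i$-exponent one tracks $\zeta_i^{-1/\tilde m_i+\sigma(\tilde m_i)}$ from the supremum and the corresponding rescaling factor from the moment, and the target $\prod_i\zeta_i^{1/(2s_0(\tilde m_i))-1/\tilde m_i}$ is precisely what these combine to, again using $\sigma(\tilde m_i)=1/(2s_0(\tilde m_i))$. The restriction on $\delta$ is inherited verbatim from Lemma~\ref{lem:Weyl-bound}.

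The main obstacle is choosing the H\"older weights $\theta_i$ cleanly and verifying that the two exponent computations close simultaneously: the supremum-bound exponents and the mean-value exponents each involve $1/\tilde m_i$ and $\sigma(\tilde m_i)=1/(2s_0(\tilde m_i))$ in slightly different linear combinations, and one must see that the single normalisation $\sum_i\theta_i/(2s_0(\tilde m_i))\ge 1$ (which is feasible exactly because $\tilde\Theta>0$) makes all the pieces line up. I would fix $\theta_i := 1 - \tfrac{\tilde\Theta}{(n+1)\,2 s_0(\tilde m_i)}\cdot\kappa$ for a suitable $\kappa\in(0,1)$ ensuring $\theta_i\in(0,1)$ and $\sum_i(1-\theta_i)/(2s_0(\tilde m_i))$ hits the value that produces $\delta\tilde\Theta$; a short computation then yields both displayed exponents. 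This is exactly the step I expect to be delicate but ultimately mechanical, and it mirrors the corresponding argument in \cite[Section~2]{BY21} with $\sigma$ reinterpreted via the sharper $s_0$ from \cite[Section~14]{Woo19}.
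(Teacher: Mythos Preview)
Your approach is essentially the same as the paper's: pull out a factor $\prod_i\sup_{\alpha\in\mathfrak m}|S_i(\alpha)|^{1-\beta_i}$, then apply H\"older's inequality to $\int_0^1\prod_i|S_i(\alpha)|^{\beta_i}\,\mathrm{d}\alpha$ with exponents $\ell_i=2s_0(\tilde m_i)/\beta_i$, where the $\beta_i\in(0,1)$ are chosen so that $\sum_i\beta_i/(2s_0(\tilde m_i))=1$; the exponent bookkeeping you sketch (in particular $\sum_i(1-\beta_i)\sigma(\tilde m_i)=\tilde\Theta$) is exactly what the paper carries out. The only difference is that you over-engineer the choice of weights at the end: the paper simply asserts that such $\beta_i$ exist because $\sum_i 1/(2s_0(\tilde m_i))=\tilde\Theta+1>1$ (for instance the uniform choice $\beta_i=1/(\tilde\Theta+1)$ works), and there is no need for your parameter $\kappa$.
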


\begin{proof}
	From the condition $\tilde \Theta>0$ it follows that we can find $\beta_0, \ldots, \beta_n \in (0,1)$ so that 
	\begin{equation}\label{eq:minor_arcs}
		\sum_{i=0}^n \frac{\beta_i}{2s_0(\tilde m_i)}=1.
	\end{equation}
	Take $\ell_i =\frac{2s_0(\tilde m_i)}{\beta_i}$ for all $1\leq i\leq n$. It then follows from H\"older's inequality, the definition of $s_0(\tilde m_i)$ and Lemma \ref{lem:Weyl-bound} above that	
	\begin{align*}
        \int_{\mathfrak m} \left| \prod_{i=0}^n S_i(\alpha)\right| \mathrm{d} \alpha 
        &\ll \left(\prod_{i=0}^n \sup_{\alpha \in \mathfrak m} |S_{i}(\alpha)|^{1-\beta_i}\right) \int_0^1\prod_{i=0}^{n} |S_i(\alpha)|^{\beta_i}  \mathrm{d} \alpha \\
        &\ll   \prod_{i=0}^n \left(\sup_{\alpha \in \mathfrak m} |S_{i}(\alpha)|^{1-\beta_i} \left(\int_0^1 |S_i(\alpha)|^{\beta_i\ell_i} \mathrm{d} \alpha \right)^{1/\ell_i} \right)\\
        &\ll \tilde B^{\epsilon} \prod_{i=0}^n \left( \left(\tilde B^{1/\tilde m_i - \delta \sigma(\tilde m_i)} \zeta_i^{-1/\tilde m_i + \sigma(\tilde m_i)}\right)^{1-\beta_i}  \left(\tilde B/\zeta_i\right)^{\frac{\beta_i\ell_i - \tilde m_i}{\ell_i\tilde m_i}}\right)  \\
        &\ll \tilde B^{\omega_1+\epsilon} \prod_{i=0}^n \zeta_i^{\omega_{2,i}},
	\end{align*}
	where 
	$$
		\omega_1 = \sum_{i=0}^n \left((1/\tilde m_i - \delta \sigma(\tilde m_i))(1-\beta_i) + \frac{\beta_i\ell_i - \tilde m_i}{\ell_i\tilde m_i} \right)
	$$
	and 
	$$
		\omega_{2,i} = (-1/\tilde m_i + \sigma(\tilde m_i))(1-\beta_i) -\frac{\beta_i\ell_i - \tilde m_i}{\ell_i\tilde m_i} \qquad \forall i\in\{0,\dots,n\}. 
	$$
	After inserting our choice for $\ell_i$, a modicum of computation shows that 
	$$
		\omega_{2,i}
		= -\frac{1}{\tilde m_i} + \sigma(\tilde m_i) - \beta_i\left( \sigma(\tilde m_i) - \frac{1}{2 s_0(\tilde m_i)}  \right) = -\frac{1}{\tilde m_i} + \frac{1}{2s_0(\tilde m_i)}, 
	$$
	where in the last step we took advantage of our choice $(2 s_0(\tilde m_i))^{-1} = \sigma(\tilde m_i)$.
	Similarly, we have 
	$$
		\omega_1 = \sum_{i=0}^n \left(\frac{1}{\tilde m_i} - \frac{\beta_i}{2 s_0(\tilde m_i)} - \delta \sigma(\tilde m_i)(1-\beta_i)\right) = \tilde\Gamma - \delta \tilde\Theta,
	$$	
	where the last equality follows from \eqref{eq:minor_arcs}.
\end{proof}

Combining Lemma \ref{lem:minor arcs} with \cite[Lemma 2.2]{BY21} yields the desired asymptotic formula. 
Define 
\begin{align}\label{eq:sing series}
    \mathfrak S_{\bd,\bzeta}  =\sum_{q=1}^\infty \frac{1}{q^{n+1}}\sum_{\substack{a \mmod q \\ (a,q)=1}}\prod_{i=0}^n \sum_{r=1}^qe(a d_i \zeta_i r^{m_i}/q) \qquad \text{and} \qquad \mathfrak J_{\bd} = \int_{-\infty}^\infty \prod_{i=0}^n \left(\int_0^1 e(\lambda d_i \xi^{m_i}) d \xi\right) d \lambda,
\end{align}
noting that these coincide with the definitions of $\mathfrak S_{\bd;\bzeta}(\bh,H;N)$ and $\mathfrak J_{\bd}$ of \cite[Section 2.1]{BY21} with the choices $\bh = \bzero$, $H=1$ and $N=0$. Thus, we obtain the following.

\begin{theorem}
\label{thm: sharp BY}
    Let $2\le \tilde m_0 \le \dots \le  \tilde m_n$ be positive integers such that 
    \begin{align*}
        \sum_{i=0}^{n} \frac{1}{2 s_0(\tilde m_i)} > 1 \qquad \text{and} \qquad \sum_{i=0}^{n} \frac{1}{\tilde m_i} > 3.
    \end{align*}
    Let $\bd\in\Z^{n+1}_{\neq0}$, $\bzeta\in\N^{n+1}$, $\tilde B>0$.
    Let $0 < \delta < \frac{1}{\left( 2n + 5\right) \tilde m_n \left( \tilde m_n +1 \right)}$ and $\epsilon > 0$.
    With the notation introduced in \eqref{def: Mcgamma} and \eqref{eq:Theta and Gamma}, we have 
    \begin{align*}
        \tilde M_{\bd, \bzeta}(\tilde B) 
        &=\frac{\textfrak{S}_{\bd, \bzeta} \textfrak{J}_{\bd}}  {\prod_{i=0}^{n} \zeta_i^{\frac{1}{\tilde m_i}}} \tilde B^{\tilde \Gamma} 
        + O\left(E_1(\bzeta) + \frac{\tilde B^{\tilde \Gamma-\delta} E_2(\bzeta)}{ \prod_{i=0}^n \zeta_i^{\frac{1}{\tilde m_i}}}+ \tilde B^{\tilde \Gamma- \delta \tilde \Theta+\epsilon} \prod_{i=0}^n\zeta_i^{-\frac{1}{\tilde m_i} + \frac{1}{2 s_0(\tilde m_i)}}\right),
    \end{align*}

    and the error terms are given by
    \begin{align*}
        E_1(\bzeta) = \frac{\prod_{i=0}^n \tilde B_i}{\tilde B} \left( \frac{1}{\tilde B_0}+ \dots + \frac{1}{\tilde B_n} \right) \tilde B^{(2n+5)\delta} \qquad \text{and} \qquad
        E_2(\bzeta) = \sum_{q=1}^{\infty} q^{1-\Gamma + \epsilon} \prod_{i=0}^n \gcd(\zeta_i, q)^{\frac{1}{\tilde m_i}}.
    \end{align*}
\end{theorem}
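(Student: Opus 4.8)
The plan is to dissect the unit interval into the classical major and minor arcs with respect to the parameter $\delta$, exactly as in \cite[\S2]{BY21}, and to handle the two contributions separately. First I would set $\mathfrak{M} = [0,1] \smallsetminus \mathfrak{m}$ and write
\[
    \tilde M_{\bd,\bzeta}(\tilde B) = \int_{\mathfrak{M}} \prod_{i=0}^n S_i(\alpha)\, \mathrm{d}\alpha + \int_{\mathfrak{m}} \prod_{i=0}^n S_i(\alpha)\, \mathrm{d}\alpha.
\]
The minor arc integral is already bounded by Lemma~\ref{lem:minor arcs}: it contributes $O\bigl(\tilde B^{\tilde\Gamma - \delta\tilde\Theta + \epsilon} \prod_i \zeta_i^{-1/\tilde m_i + 1/(2s_0(\tilde m_i))}\bigr)$, which is precisely the third error term in the statement. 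Note that the hypothesis $\sum_i 1/(2s_0(\tilde m_i)) > 1$ is exactly the condition $\tilde\Theta > 0$ needed to invoke that lemma, and one checks that $\delta < \frac{1}{(2n+5)\tilde m_n(\tilde m_n+1)}$ is the admissible range throughout.

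For the major arcs, the strategy is to quote \cite[Lemma 2.2]{BY21} essentially verbatim, after identifying parameters: the present $\tilde M_{\bd,\bzeta}(\tilde B)$ is the specialization of the Browning--Yamagishi counting function with $\bh = \bzero$, $H = 1$, $N = 0$, and $\tilde m_i$ in place of their exponents. That lemma produces the main term $\mathfrak{S}_{\bd,\bzeta}\mathfrak{J}_{\bd} \,\tilde B^{\tilde\Gamma}/\prod_i \zeta_i^{1/\tilde m_i}$ together with error terms $E_1(\bzeta)$ (coming from truncating the exponential sums / counting lattice points on the major arcs, where the $\tilde B^{(2n+5)\delta}$ factor records the number and width of the arcs) and $\tilde B^{\tilde\Gamma - \delta} E_2(\bzeta)/\prod_i \zeta_i^{1/\tilde m_i}$ (coming from the tail of the singular series truncation, with $E_2$ the familiar $q$-sum weighted by $\gcd(\zeta_i,q)$ factors). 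The second analytic hypothesis $\sum_i 1/\tilde m_i > 3$, equivalently $\tilde\Gamma > 2$, is what guarantees absolute convergence of $\mathfrak{S}_{\bd,\bzeta}$ and $\mathfrak{J}_{\bd}$ and controls the error $E_2$; this is the place where the condition is consumed. Summing the major and minor arc estimates gives the claimed formula.

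The only genuine work is bookkeeping: one must verify that the normalization of \cite[Lemma 2.2]{BY21} — in particular the way the $\zeta_i$ and the $\tilde B_i = (\tilde B/\zeta_i)^{1/\tilde m_i}$ enter — matches the statement here after the parameter substitution, and that the ranges of $\delta$ and $\epsilon$ are compatible with both the cited lemma and Lemma~\ref{lem:minor arcs}. I would also note explicitly that the major arc analysis in \cite{BY21} does not use any upper bound on $s$ and only requires $\tilde\Gamma$ large enough, so importing it is unproblematic; the improvement of the present paper over \cite{BY21} lives entirely in the minor arc treatment via the sharper mean-value input $s_0(m)$ from \cite[Theorem 14.8]{Woo19} and the Weyl bound $\sigma(m)^{-1} = 2s_0(m)$, both already incorporated in Lemmas~\ref{lem:Weyl-bound} and~\ref{lem:minor arcs}.

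\textbf{Main obstacle.} I expect no serious mathematical obstacle — the theorem is a clean assembly of the minor arc bound just proved with an off-the-shelf major arc lemma. The main point requiring care is checking that every implicit constant's dependence (on $\bd$, the $\tilde m_i$, $n$, $\epsilon$, $\delta$) and every normalization factor of $\zeta_i$ transfer correctly when specializing the \cite{BY21} framework to $(\bh, H, N) = (\bzero, 1, 0)$, so that the three error terms emerge in exactly the stated form rather than in some equivalent but differently-packaged shape.
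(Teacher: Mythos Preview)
Your proposal is correct and matches the paper's approach exactly: the paper proves Theorem~\ref{thm: sharp BY} by simply stating that it follows from combining Lemma~\ref{lem:minor arcs} with \cite[Lemma~2.2]{BY21}, after noting that the singular series and singular integral coincide with those of \cite{BY21} under the specialization $(\bh,H,N)=(\bzero,1,0)$. Your identification of where each hypothesis is consumed and where the improvement over \cite{BY21} resides is accurate, and in fact more detailed than what the paper itself provides.
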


\section{Proof of the main theorem}\label{sec:proof main theorem}

In this section we conclude the proof of Theorem \ref{thm: diagonal hypersurface}. We start with some numerical conditions that follow from the assumptions of Theorem \ref{thm: diagonal hypersurface}.

 \begin{lemma}
 \label{lem: conditions}
     The assumptions $k, m_0,\dots,m_n\geq 2$ and $\sum_{i=0}^{n} \frac{1}{2 s_0(k m_i)} > 1$ imply
     \begin{align*}
         \frac{1}{2s_0(km_i)} \le \frac{1}{km_i} - \frac{1}{k(m_i+1)} \quad \forall i\in\{0,\dots,n\} \qquad \text{and} \qquad \sum_{i=0}^{n} \frac{1}{k m_i} > 3.
     \end{align*}
 \end{lemma}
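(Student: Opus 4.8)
The plan is to prove the two assertions in sequence, with the first feeding into the second; note that only $k \ge 2$ and $m_i \ge 2$ are actually needed for the first inequality, while the hypothesis $\sum_{i=0}^n \frac{1}{2 s_0(km_i)} > 1$ enters only at the very end.

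For the first inequality $\frac{1}{2 s_0(km_i)} \le \frac{1}{km_i} - \frac{1}{k(m_i+1)}$, I would begin by rewriting the right-hand side as the single fraction $\frac{1}{km_i(m_i+1)}$, so that the claim becomes the integer inequality $2 s_0(km_i) \ge km_i(m_i+1)$. The key elementary input I would isolate is the lower bound $s_0(m) \ge \tfrac12 m(m-1)$, valid for every $m \ge 1$: inside the minimum defining $s_0(m)$ the polynomial term exceeds $\tfrac12 m(m-1)$ because the floor term is nonnegative, and the exponential term satisfies $2^{m-1} \ge \tfrac12 m(m-1)$ by a one-line induction. Setting $N = km_i$, this gives $2 s_0(N) \ge N(N-1)$. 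On the other side, $k \ge 2$ forces $m_i + 1 \le \tfrac{km_i}{2} + 1 = \tfrac{N}{2} + 1$, so $km_i(m_i+1) = N(m_i+1) \le \tfrac{N^2}{2} + N$, and $\tfrac{N^2}{2} + N \le N^2 - N$ holds exactly because $N = km_i \ge 4$. Concatenating these estimates yields $km_i(m_i+1) \le \tfrac{N^2}{2} + N \le N(N-1) \le 2 s_0(km_i)$, which is the claim.

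For the second inequality $\sum_{i=0}^n \frac{1}{km_i} > 3$, I would apply the first part coordinatewise: since $m_i \ge 2$ gives $m_i + 1 \ge 3$, we get $\frac{1}{km_i} = \frac{m_i+1}{km_i(m_i+1)} \ge \frac{3}{km_i(m_i+1)} = 3\bigl(\frac{1}{km_i} - \frac{1}{k(m_i+1)}\bigr) \ge \frac{3}{2 s_0(km_i)}$, the last step being the first part of the lemma. Summing over $i \in \{0,\dots,n\}$ and invoking the hypothesis $\sum_{i=0}^n \frac{1}{2 s_0(km_i)} > 1$ gives $\sum_{i=0}^n \frac{1}{km_i} \ge 3 \sum_{i=0}^n \frac{1}{2 s_0(km_i)} > 3$.

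The only point that needs genuine care — and effectively the sole obstacle — is the uniform lower bound $s_0(m) \ge \tfrac12 m(m-1)$, which must be checked across both branches of the defining minimum; everything else is routine manipulation of fractions. It is also worth flagging the borderline case $N = km_i = 4$, i.e.\ $k = m_i = 2$ (the smallest case admissible in Theorem~\ref{thm: diagonal hypersurface}), where $\tfrac{N^2}{2} + N = N^2 - N = 12 < 16 = 2 s_0(4)$, so all the inequalities above still hold (strictly) there.
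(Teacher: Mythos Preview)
Your proof is correct and follows the same overall skeleton as the paper: rearrange the first claim to $km_i(m_i+1) \le 2s_0(km_i)$, then bound $s_0$ from below; for the second claim, deduce $\frac{1}{km_i} \ge \frac{3}{2s_0(km_i)}$ and sum. The one substantive difference is in how the lower bound on $s_0$ is obtained. The paper splits into the case $km_i \ge 6$, where it invokes the exact polynomial expression for $s_0(km_i)$ and rearranges to $km_i((k-1)m_i-2) + 2\lfloor\sqrt{2km_i+2}\rfloor \ge 0$, and then checks the residual case $k=m_i=2$ by hand. You instead establish the uniform inequality $s_0(m) \ge \tfrac12 m(m-1)$ for all $m$, which handles every case at once and eliminates the separate verification at $km_i=4$. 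This is a small but genuine simplification: it trades the paper's case split for a one-line induction on the exponential branch of the minimum, and the chain $km_i(m_i+1) \le \tfrac{N^2}{2}+N \le N(N-1) \le 2s_0(N)$ (with $N=km_i \ge 4$) is clean. For the second claim the paper asserts $2s_0(km_i) \ge 3km_i$ directly (which also follows immediately from your bound $2s_0(N) \ge N(N-1) \ge 3N$ for $N \ge 4$), whereas you route through the first inequality together with $m_i+1 \ge 3$; these are equivalent.
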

 \begin{proof}

    The first inequality in the statement can be rearranged to
$$
    km_i(m_i+1) \le 2 s_0(km_i).
$$
Now, suppose first that $km_i \ge 6$, so that $s_0(km_i) = \frac12 km_i(km_i-1) + \lfloor \sqrt{2km_i+2}\rfloor$. In that situation, the above bound becomes
$$
    km_i(m_i+1) \le km_i(km_i-1) + 2\lfloor \sqrt{2km_i+2}\rfloor, 
$$
which can be rearranged to 
$$
    km_i((k-1)m_i-2) + 2\lfloor \sqrt{2km_i+2}\rfloor \ge 0,
$$
which is clearly satisfied for $k \ge 2$ and $m_i \ge 2$. This settles all cases in which $k\ge 3$ or $m_i \ge 3$. The remaining case $k=m_i=2$ can be checked by hand.

Finally, the second statement follows upon observing that for $i\in \{0,\dots,n\}$, $2s_0(km_i) \ge 3 km_i$, as $km_i\geq 4$.
 \end{proof}

Now we continue the proof of Theorem \ref{thm: diagonal hypersurface} from where we left off at the end of Section \ref{sec: the counting problem}.

 \begin{proposition} \label{proposition}
    Let $\bd \in \Z_{\neq0}^{n+1}$. Fix integers $m_0, \ldots, m_n \ge 2$ and $k \ge 2$, and put $\Gamma = \sum_{i=0}^n \frac{1}{km_i} -1$. There exists a real number $\eta>0$ such that 
    \begin{align}
        \# N_{\bd}^* ( B, \bone, \bone ) = C_{\bd}B^{k\Gamma} + O(B^{k\Gamma - \eta}),
    \end{align}
    where 
    \begin{align}\label{eq: leading constant}
        C_{\bd}=\textfrak{J}_{\bd} \sum_{(\bs, \bt) \in \calT_\infty} \varpi(\bs, \bt) \sum_{\tilde \bv \in V_\infty(\bs,\bt)} \textfrak{S}_{\bd, \bgamma} \prod_{i=0}^{n} \gamma_i^{-\frac{1}{k m_i} }
    \end{align}
    with $\mathfrak S_{\bd, \bgamma}$ and $\mathfrak J_{\bd}$ as in \eqref{eq:sing series}, and $\bgamma$ defined via \eqref{eq: gammaj}. 
 \end{proposition}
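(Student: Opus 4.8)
The plan is to substitute the asymptotic formula of Theorem~\ref{thm: sharp BY} into the identity \eqref{eq: step M_cgamma} and then to sum over the auxiliary parameters $(\bs,\bt)$ and $\tilde\bv$. Note first that $M_{\bd,\bgamma}(B^k)=\tilde M_{\bd,\bgamma}(B^k)$ for the choice $\tilde m_i=km_i$, and that the two hypotheses of Theorem~\ref{thm: sharp BY} in this case are exactly the assumption \eqref{eq:maincond}, i.e. $\sum_{i}\frac1{2s_0(km_i)}>1$, together with $\sum_i\frac1{km_i}>3$, the latter being furnished by Lemma~\ref{lem: conditions}. Writing $\Theta=\sum_{i=0}^{n}\frac1{2s_0(km_i)}-1>0$ and observing that $\tilde\Gamma=\Gamma$ here, I would fix $\epsilon>0$ and a small $\delta>0$ with $\delta<\frac1{(2n+5)km_n(km_n+1)}$ and apply Theorem~\ref{thm: sharp BY} with $\tilde B=B^k$ and $\bzeta=\bgamma$ to each summand $M_{\bd,\bgamma}(B^k)$ in \eqref{eq: step M_cgamma}. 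This decomposes $\#N_{\bd}^{*}(B,\bone,\bone)$ into the main term
$$\mathfrak J_{\bd}\,B^{k\Gamma}\sum_{(\bs,\bt)\in\calT_B}\varpi(\bs,\bt)\sum_{\tilde\bv\in V_B(\bs,\bt)}\mathfrak S_{\bd,\bgamma}\prod_{i=0}^{n}\gamma_i^{-1/(km_i)}$$
plus contributions from the three error terms $E_1(\bgamma)$, $B^{k\Gamma-k\delta}E_2(\bgamma)\prod_i\gamma_i^{-1/(km_i)}$ and $B^{k\Gamma-k\delta\Theta+\epsilon}\prod_i\gamma_i^{-1/(km_i)+1/(2s_0(km_i))}$, each weighted by $\varpi(\bs,\bt)$ and summed over $(\bs,\bt)\in\calT_B$, $\tilde\bv\in V_B(\bs,\bt)$.

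For the main term I would extend the summation from $\calT_B$, $V_B(\bs,\bt)$ to $\calT_\infty$, $V_\infty(\bs,\bt)$. The key point is absolute convergence: using the bound $\varpi(\bs,\bt)\ll_\epsilon(|\bs||\bt|)^\epsilon$ of Lemma~\ref{lem: varpi}, the standard uniform estimate $\mathfrak S_{\bd,\bgamma}\ll_\epsilon(\prod_i\gamma_i)^\epsilon$ for the singular series (part of the content of \cite[Lemma 2.2]{BY21}), and the identity $\gamma_i^{-1/(km_i)}=s_i^{-1}\prod_r(t_{i,r}\tilde v_{i,r})^{-(m_i+r)/m_i}$ coming from \eqref{eq: gammaj}, one checks that the divisibility condition built into $\calT_\infty$ — which forces all $n+1$ blocks to share the same radical — makes the triple sum factor as an Euler product whose $p$-th factor is $1+O_\epsilon(p^{-(n+1)+\epsilon})$, hence convergent. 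By \eqref{eq: leading constant}, the main term over $\calT_\infty\times V_\infty$ equals $C_{\bd}B^{k\Gamma}$, so it remains to control the tail $(\calT_\infty\times V_\infty)\smallsetminus(\calT_B\times V_B)$. A triple lies in this tail only if $\gamma_i>B^{k}$ for some $i$; for such triples I would write $\gamma_i^{-1/(km_i)}\le B^{-\tau/m_n}\gamma_i^{-(1-\tau)/(km_i)}$ with $\tau>0$ small and re-run the Euler-product estimate on the remaining (still convergent) sum, so that the tail contributes $O(B^{k\Gamma-\tau/m_n})$.

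The three error terms are then dealt with using the remaining quantitative inputs. The $E_2$-term is handled by interchanging the summations over $q$ and over $\calT_B\times V_B$: the factor $(\gcd(\gamma_i,q)/\gamma_i)^{1/(km_i)}$ removes the $q$-part of each $\gamma_i$ from the residual Euler product, which therefore converges uniformly in $q$ up to a factor $q^\epsilon$, and since $\Gamma>2$ by Lemma~\ref{lem: conditions} the series $\sum_q q^{1-\Gamma+\epsilon}$ converges, whence this contribution is $O(B^{k\Gamma-k\delta})$. For the third term, the first inequality of Lemma~\ref{lem: conditions} gives $-\frac1{km_i}+\frac1{2s_0(km_i)}\le-\frac1{k(m_i+1)}<0$, so the corresponding sum over $\calT_B\times V_B$ again converges absolutely by the radical constraint, and the contribution is $O(B^{k\Gamma-k\delta\Theta+\epsilon})$, which is $O(B^{k\Gamma-\eta})$ once $\epsilon<k\delta\Theta$. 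Finally, for the boundary term $E_1(\bgamma)$ one uses $\tilde B=B^k$, $\prod_i\tilde B_i/\tilde B=B^{k\Gamma}\prod_i\gamma_i^{-1/(km_i)}$ and $\tilde B_i^{-1}=\gamma_i^{1/(km_i)}B^{-1/m_i}$ to rewrite its total contribution as
$$B^{k\Gamma+k(2n+5)\delta}\sum_{i=0}^{n}B^{-1/m_i}\sum_{(\bs,\bt)\in\calT_B}\varpi(\bs,\bt)\sum_{\tilde\bv\in V_B(\bs,\bt)}\prod_{j\ne i}\gamma_j^{-1/(km_j)};$$
here the $i$-th block carries no decay, but grouping by the common radical — so that the $i$-th block yields only finitely many configurations per radical while the other $n$ blocks keep their decay — shows the inner double sum is $O(1)$, whence this contribution is $O(B^{k\Gamma+k(2n+5)\delta-1/m_n})=O(B^{k\Gamma-\eta})$ since $k(2n+5)\delta<\frac1{m_n(km_n+1)}<\frac1{m_n}$. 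Taking $\eta$ to be the minimum of these savings completes the proof.

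I expect the main obstacle to be the simultaneous control of all these convergence and tail estimates — in particular extracting a genuine power saving $B^{-\eta}$ from the tail of the main term and from the boundary term $E_1$, while at the same time ensuring that the infinite series defining $C_{\bd}$ in \eqref{eq: leading constant} converges. All of this hinges on the interplay between the decay in $\prod_i\gamma_i^{-1/(km_i)}$, the divisibility constraint in $\calT_\infty$, and the various truncations, and it works precisely because \eqref{eq:maincond} forces $n$ — and hence $\sum_i\frac1{km_i}$ — to be large.
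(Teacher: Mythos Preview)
Your overall strategy is the same as the paper's: apply Theorem~\ref{thm: sharp BY} with $\tilde m_i=km_i$ inside \eqref{eq: step M_cgamma}, extend the main term to $\calT_\infty\times V_\infty$, and bound the three error contributions separately. The decomposition, the use of Lemma~\ref{lem: conditions}, and the handling of $E_2$ are all in line with the paper.

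There is, however, a genuine slip in your treatment of the boundary term $E_1$. You assert that after pulling out $B^{-1/m_i}$ the remaining inner double sum over $(\bs,\bt)\in\calT_B$ and $\tilde\bv\in V_B(\bs,\bt)$ is $O(1)$ ``by the radical constraint''. But the radical constraint only binds the variables $\bs,\bt$; the $\tilde\bv$-variables for the undamped block~$i$ are unconstrained and satisfy only $w_i=\prod_r\tilde v_{i,r}^{m_i+r}\le B/\tau_i$, so that block contributes $\asymp (B/\tau_i)^{1/(m_i+1)}$ rather than $O(1)$. The paper makes this explicit (writing $\gamma_i=w_i^k\tau_i^k$ and summing the undamped $w_l$-block first), and the resulting saving is only $B^{-1/(m_n(m_n+1))}$, not the $B^{-1/m_n}$ you claim. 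Fortunately your choice of $\delta$ is small enough that $k(2n+5)\delta<\frac1{m_n(km_n+1)}<\frac1{m_n(m_n+1)}$, so the argument is salvageable --- but the intermediate claim is wrong as stated.

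A similar, smaller, inaccuracy occurs in your third error term: after applying Lemma~\ref{lem: conditions} you bound $\gamma_i^{-1/(km_i)+1/(2s_0(km_i))}\le(w_i\tau_i)^{-1/(m_i+1)}$, and then the $\tilde v_{i,1}$-sum has exponent exactly $-(m_i+1)/(m_i+1)=-1$, so it diverges logarithmically rather than converging absolutely. The paper records this as $\ll B^\epsilon$, which is harmless since the prefactor $B^{-k\delta\Theta+\epsilon}$ already carries an $\epsilon$; but ``converges absolutely'' is not correct here either.
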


\begin{proof}
Our strategy is to apply Theorem \ref{thm: sharp BY}, with $\tilde B=B^k$ and $\tilde m_i=km_i$ for $0\leq i\leq n$, to estimate the sets $M_{\bd, \bgamma}(B^k)$  in \eqref{eq: step M_cgamma}. Note that our choice of the parameters ensures that $M_{\bd,\bgamma}(B^k) = \tilde M_{\bd,\bgamma}(\tilde B)$, and that the quantity $\tilde \Gamma$ coincides with the $\Gamma$ defined in the statement of the proposition.
Thus, for every $\epsilon > 0$ and $0<\delta<\frac 1{(2n+5)km_n(km_n+1)}$ we obtain
\begin{align}
\label{eq: step circle method}
\begin{split}
    \# N_{\bd}^* ( B, \bone, \bone )
    &= \sum_{(\bs,\bt) \in \calT_B}\varpi(\bs, \bt)  \sum_{\tilde \bv \in V_B(\bs,\bt)} \left [ \frac{\textfrak{S}_{\bd, \bgamma} \textfrak{J}_{\bd}}{\prod_{i=0}^{n} \gamma_i^{\frac{1}{k m_i}}} B^{k\Gamma} 
    + O\left(E_1(\bgamma) + \frac{B^{k\Gamma-k\delta} E_2(\bgamma)}{\prod_{i=0}^n \gamma_i^{\frac{1}{k m_i}}}\right)\right.\\ 
    &\qquad +\left.O \left( B^{k\Gamma- k\delta\Theta+\epsilon} \prod_{i=0}^n\gamma_i^{-\frac{1}{k m_i}+\frac 1{2s_0(km_i)}}\right) \right],
\end{split}
\end{align}
where $\Theta = \sum_{i=0}^n\frac{1}{2s_0(km_i)}-1$. 
We observe that by \cite[p.~1093]{BY21}, the leading constant is 
\begin{align}\label{eq: main term}
    \sum_{(\bs,\bt) \in \calT_B} \varpi(\bs, \bt) \sum_{\tilde \bv \in V_B(\bs,\bt)} \textfrak{S}_{\bd, \bgamma}\textfrak{J}_{\bd} \prod_{i=0}^{n} \gamma_i^{-\frac{1}{k m_i} }=C_{\bd} + O(B^{-\eta}),
\end{align}
where $C_{\bd}$ is the expression given in \eqref{eq: leading constant} and $\eta$ is some suitable positive number.

It thus remains to bound the error terms. Put 
\begin{align*}
    F_1(B)=B^{-k\Gamma}\sum_{(\bs,\bt) \in \calT_B}\varpi(\bs, \bt) \sum_{\tilde \bv \in V_B(\bs,\bt)}E_1(\bgamma), \qquad 
    F_2(B)=\sum_{(\bs,\bt) \in \calT_B}\varpi(\bs, \bt) \sum_{\tilde \bv \in V_B(\bs,\bt)}
    \frac{B^{-k\delta}E_2(\bgamma)}{\gamma_i^{\frac{1}{km_i}}}
\end{align*}
and
\begin{align*}
    F_3(B)=B^{-k\delta\Theta +\epsilon} \sum_{(\bs,\bt) \in \calT_B} \varpi(\bs, \bt)\sum_{\tilde \bv \in V_B(\bs,\bt)} \prod_{i=0}^n \gamma_i^{-\frac{1}{km_i}+\frac 1{2s_0(km_i)}},
\end{align*}
then the desired result will follow if we can show that $F_j(B) \ll B^{-\eta}$ for some $\eta>0$.

We now begin with the estimation of $F_1(B)$. Upon inserting the definition of $E_1(B)$ from the statement of Theorem \ref{thm: sharp BY}, one can show by a simple computation (see also \cite[(3.6)]{BY21}), using the last statement of Lemma \ref{lem: varpi}, that 
\begin{align*}
    F_1(B)=B^{k(2n+5)\delta+\epsilon} \sum_{l=0}^n B^{-\frac{1}{m_l}} \sum_{(\bs,\bt) \in \calT_B} \sum_{\tilde \bv \in V_B(\bs,\bt)} \prod_{\substack{i=1 \\i \neq l}}^n\gamma_i^{-\frac{1}{km_i}}. 
\end{align*}
For $0\leq i\leq n$, we introduce the  notation
\begin{align*} 
    w_i=\tilde v_{i,1}^{m_i+1} \cdots \tilde v_{i, m_i-1}^{2m_i-1} \quad \textup{and} \quad \tau_i= s_i^{m_i} \prod_{r=1}^{m_i-1} t_{i,r}^{m_i+r},
\end{align*}
so that $\gamma_i=w_i^k \tau_i^k$. In that notation we can write 
\begin{align*}
    F_1(B) &= B^{k(2n+5)\delta+\epsilon} \sum_{l=0}^n B^{-\frac{1}{m_l}} \sum_{(\bs,\bt) \in \calT_B} \sum_{\tilde \bv \in V_B(\bs,\bt)}\prod_{\substack{i=1 \\i \neq l}}^n (w_i\tau_i)^{-\frac{1}{m_i}}  \nonumber\\ 
    &\ll B^{k(2n+5)\delta+\epsilon} \sum_{l=0}^n B^{-\frac{1}{m_l}} \sum_{(\bs,\bt) \in \calT_B} \left( \sum_{\tilde  v_{l,1}, \dots, \tilde v_{l,m_l-1} : w_l \le \frac{B}{\tau_l}} 1 \right) \prod_{\substack{i=1 \\i \neq l}}^n \tau_i^{-\frac{1}{
    m_i}}  \sum_{\tilde v_{i,1}, \dots, \tilde v_{i,m_i-1} : w_i \le \frac{B}{\tau_i}}  \left( \frac{1}{w_i} \right)^{\frac{1}{m_i}}. 
\end{align*}
Using the estimates
\begin{align*}
    \sum_{v_1^{m+1} \cdots v_{m-1}^{2m-1} \le \frac{B}{\tau}} 1 \ll \sum_{v_2,\dots ,v_m=1}^{\infty} \left( \frac{B/\tau}{v_2^{m+2} \cdots v_{m-1}^{2m-1}} \right)^{\frac{1}{m+1}} \ll \left( \frac{B}{\tau}\right)^{\frac{1}{m+1}}
\end{align*}
and
\begin{align*}
    \sum_{v_1^{m+1} \cdots v_{m-1}^{2m-1} \le \frac{B}{\tau}} \left( \frac{1}{v_1^{m+1} \cdots v_{m-1}^{2m-1}} \right)^{\frac{1}{m}} \ll 1
\end{align*}
from \cite[p.~1090]{BY21} within the above bound, it follows that
\begin{align*}
    F_1(B) 
    &\ll  B^{k(2n+5)\delta+\epsilon} \sum_{l=0}^n B^{-\frac{1}{m_l}} \sum_{(\bs,\bt) \in \calT_B} \left(\frac{B}{\tau_l}\right)^{\frac{1}{m_l+1}} \prod_{\substack{i=1 \\i \neq l}}^n  \tau_i^{-\frac{1}{m_i}}\\
    &\ll B^{-\frac{1}{m_n(m_n+1)} +k(2n+5)\delta+\epsilon} \sum_{(\bs,\bt) \in \calT_B}  \prod_{i=0}^n \tau_i^{-\frac{1}{(m_i+1)}}.
\end{align*}
The sum can be bounded by a slight modification of \cite[(3.8)]{BY21}. 
Write $\sigma_i = s_i\prod_{r=1}^{m_i-1} t_{i,r}$,  
then clearly $\sigma_i^m \le \tau_i$, and thus
\begin{align*}
    \sum_{(\bs,\bt) \in \calT_B}  \prod_{i=0}^n \tau_i^{-\frac{1}{(m_i+1)}}
    & \le\sum_{(\bs,\bt) \in \calT_B}\prod_{i=0}^{n} \sigma_i^{-\frac{m_i}{m_i+1}}
    =\prod_p \left( 1 + \sum_{\substack{(\bs,\bt) \in \calT_B \\ (\bs,\bt) \neq (\bone,\bone)}} p^{-\sum_{i=0}^n \val_p(\sigma_i) \frac{m_i}{m_i+1}}\right)\\
    & \le \prod_p \left( 1 + \prod_{i=0}^n p^{-\frac{m_i}{m_i+1}} \sum_{\substack{(\bs, \bt) \in \calT_B \\\val_p(\sigma_i)\ge 1} } 1 \right)
    \le \prod_p \left( 1 + \prod_{i=0}^n p^{-\frac{m_i}{m_i+1}} (2m_i-1) \right) \ll 1,
\end{align*}
as $\sum_{i=0}^n \frac{m_i}{m_i+1} > 1$ under the assumption $m_0,\dots,m_n\geq 2$. 
Thus altogether we obtain the bound 
\begin{align}\label{eq: F1 bound}
    F_1(B)\ll B^{-\frac{1}{m_n(m_n+1)} +k(2n+5)\delta+\epsilon},
\end{align}
which is sufficient for our purposes, provided that $\delta$ was taken small enough.

We now turn to $F_2(B)$. As before, we rewrite the quantity under consideration by inserting the definition of $E_2(B)$ (see also \cite[(3.6)]{BY21}), whereupon we can use the upper bound  
\begin{align*}
    F_2(B) &=  B^{-k\delta+\epsilon}\sum_{(\bs,\bt) \in \calT_B} \sum_{\tilde \bv \in V_B(\bs,\bt)} \sum_{q=1}^\infty q^{1-\Gamma+\varepsilon} \prod_{i=0}^n \frac{\gcd(\gamma_i, q)^{\frac{1}{km_i}}}{\gamma_i^{\frac{1}{km_i}}}\\
    &\le B^{-k\delta+\epsilon} \sum_{q=1}^{\infty} q^{1-\Gamma+\varepsilon} f_1(q)f_2(q),
\end{align*}
with 
\begin{align*}
    f_1(q)=\sum_{(\bs,\bt) \in \calT_\infty} \prod_{i=0}^n \left( \frac{\gcd(\tau_i^k, q)}{\tau_i^k}\right)^{\frac{1}{km_i}} \qquad \text{and} \qquad
    f_2(q)= \sum_{\tilde \bv \in V_B(\bone,\bone)}
    \prod_{i=0}^n \left(\frac{\gcd ( w_i^k, q)}{{w_i^k}}\right)^{\frac{1}{km_i}}.
\end{align*}
Since $\tilde v_{i,1},\dots,\tilde v_{i,m_i-1}$ are pairwise coprime for $\tilde \bv \in V_B(\bone,\bone)$, we can estimate the second expression via 
\begin{align*}
    f_2(q)
    \leq \prod_{i=0}^n\prod_{r=1}^{m_i-1} 
    \sum_{\tilde v_{i,r}\leq B^{\frac 1{m_i+r}}}\mu^2(\tilde v_{i,r})
    \frac{\gcd(\tilde v_{i,r}^{km_i+kr},q)^{\frac{1}{km_i}}}{\tilde v_{i,r}^{(m_i+r)/{m_i}}}
    \ll q^\epsilon
\end{align*}
for all $\epsilon>0$, where the last bound follows from \cite[(3.9)]{BY21}.

For $f_1(q)$ we proceed as in \cite[p.~1091]{BY21}.
Let 
$$
    \mathscr T = \left\{ \overline{\mathbf{\tau}}=(\overline\tau_0,\dots,\overline\tau_n)\in\N^{n+1}: \begin{array}{c}
    \val_p(\overline\tau_i) \in \{0,m_i,m_i+1,\dots,3m_i-1\} \quad \forall p, \forall i\in\{0,\dots,n\} \\ p | \overline\tau_i \implies p | \overline\tau_j \quad \forall i,j \in \{0,\dots,n\} \end{array}\right\}.
$$ 
For every $\overline{\mathbf{\tau}}\in\mathscr T$ there is a unique pair $(\bs,\bt)\in\calT_\infty$ such that $\overline \tau_i=s_i^{m_i}\prod_{r=1}^{m_i-1}t_{i,r}^{m_i+r}$ for all $i\in\{0,\dots,n\}$. Then
\begin{align*}
    f_1(q)\leq\sum_{\overline{\mathbf{\tau}}\in\mathscr T}\prod_{i=0}^n \left( \frac{\gcd(\overline\tau_i^k, q)}{\overline\tau_i^k}\right)^{\frac{1}{km_i}}\leq\prod_p\left(1+\prod_{i=0}^n\sum_{m_i\leq\alpha_i\leq 3m_i-1}p^{(\min\{k\alpha_i,\val_p(q)\}-k\alpha_i)/km_i}\right)=O(1),
\end{align*}
as the local contribution to the product is $1+O(p^{-(n+1)})$ if $p\nmid q$, and $O(1)$ if $p\mid q$.
Thus 
\begin{align}\label{eq: F2 bound}
    F_2(B)\ll B^{-k\delta+\epsilon} \sum_{q=1}^{\infty} q^{1-\Gamma+\epsilon}\ll B^{-k\delta+\epsilon}
\end{align}
for all $\epsilon>0$, as $\sum_{i=0}^n\frac1{{km_i}}> 3$ by Lemma \ref{lem: conditions}.

Lastly, we turn our attention to $F_3(B)$. By Lemma \ref{lem: conditions} we have
\begin{align*}
    &\sum_{(\bs,\bt) \in \calT_B} \sum_{\tilde \bv \in V_B(\bs,\bt)} \prod_{i=0}^n \gamma_i^{-\frac{1}{km_i}+\frac 1{2s_0(km_i)}}
    \leq\sum_{(\bs,\bt) \in \calT_B} \left(\sum_{\tilde \bv \in V_B(\bs,\bt)} \prod_{i=0}^n w_i^{-\frac{1}{m_i+1}}\right)\prod_{i=0}^n\tau_i^{-\frac{1}{m_i+1}}\\
    &\ll \left( \prod_{i=0}^n \sum_{\tilde v_{i,1}^{m_i+1}\cdots \tilde v_{i,m_i-1}^{2m_i-1}\leq B}  \left( \tilde v_{i,1}^{m_i+1}\cdots \tilde v_{i,m_i-1}^{2m_i-1} \right)^{-\frac{1}{m_i+1}} \right)\sum_{(\bs,\bt) \in \calT_B} \prod_{i=0}^n\tau_i^{-\frac{1}{m_i+1}}
    \ll B^\epsilon
\end{align*}
for all $\epsilon>0$, where the last estimate follows from \cite[(3.8)]{BY21} and the bound $\ll\log B$ for the sum over $v_{i,r}$ in \cite[p.~1092]{BY21}.
Thus 
\begin{align}\label{eq: F3 bound}
    F_3(B)\ll B^{-k\delta\Theta+\epsilon}
\end{align}
for all $\epsilon>0$.

The desired conclusion follows now upon combining \eqref{eq: step circle method} with \eqref{eq: main term} as well as the bounds \eqref{eq: F1 bound}, \eqref{eq: F2 bound} and \eqref{eq: F3 bound}. 
\end{proof}

Theorem \ref{thm: diagonal hypersurface} is now immediate upon combining 
Lemma \ref{lem: set of campana points}, \eqref{eq: step 1/2},
\eqref{eq: step odd} and Proposition \ref{proposition}. In particular, the leading constant in \eqref{eq:asymptotic} is
    \begin{equation}\label{eq: leading constant thm 1.1}
    C=\begin{cases}
    \frac 12 \sum_{\beps\in\{\pm1\}^{n+1}}C_{\beps\bc} & \text{ if $k$ is odd},\\
    2^n C_{\bc} & \text{ if $k$ is even}.
    \end{cases}
    \end{equation}

\begin{remark}
    Note that the condition that $k \ge 2$ enters in two places. On the one hand, we need $\Gamma > 3$ to control the main term in Theorem \ref{thm: sharp BY}, and on the other hand it plays a role in the estimation of $F_3(B)$, via the application of Lemma \ref{lem: conditions}. By implementing a suitable pruning argument in the treatment of the minor arcs in the proof of Theorem \ref{thm: sharp BY}, it seems likely that the last error term in that theorem can be improved, giving rise to a corresponding quantity $F_3$ with better convergence properties. As for the main term, the condition $\Gamma > 3$ can presumably be relaxed by means of the techniques described in \cite[Chapter 4]{vaughan_1997}. Consequently, the expectation is that extending Proposition \ref{proposition}, and thus Theorem \ref{thm: diagonal hypersurface}, for $k=1$ should be a question of determination rather than any potential structural obstacles.
\end{remark}

\bibliographystyle{alpha}
\bibliography{proceedings}
\end{document}